 \def\ssec{\subsection} \def\Kol{Kolmogorov }
\def\equ{equation }     \def\rv{random variable } 
   \def\Tc{\tilde{c}} \def\Tq{\tilde{q}} \def\Tl{{\tilde{\lambda}}}  \def\mW{{\mathcal W}} \def\w{{\mathbf w}}  \def\fun{function } \def\funs{functions } \def\rui{\psi}
\def\GS{Gerber-Shiu }   \newcommand{\bff}[1]{{\mbox{\boldmath$#1$}}}
\def\ts{two-sided } \def\wk{well-known } \def\pros{probabilities }
   \def\upm{up to a multiplicative constant}
\def\abs{absolute }  \def\rps{ruin probabilities } \def\ren{renewal equation }
 \def\wk{well-known} \def\ts{two sided exit }
\def\G{\Gamma}
\newcommand{\kil}{\mathbf{e}} \def\procs{processes} \def\proc{process}
\def\Rui{\Psi} \def\sRui{\ovl{\Rui}}
 \def\fp{first passage }
\def\ovl{\overline}
\def\lev{L\'evy }  \def\mL{{\mathcal L}} \def\rp{ruin probability } 
 \def\sn{spectrally negative }
\long\def\symbolfootnote[#1]#2{
\begingroup
\def\thefootnote{\fnsymbol{footnote}}\footnote[#1]{#2}
\endgroup}
\def\fn{\symbolfootnote}
\def\I{\infty} \def\Eq{\Leftrightarrow}
  \def\T{\widetilde}
\def\CL{Cram\`er-Lundberg }  
 \def\PH{phase-type }  
\def\BEN{\begin{enumerate}}  \def\BI{\begin{itemize}}
\def\EEN{\end{enumerate}}   \def\EI{\end{itemize}} \def\im{\item} \def\Lra{\Longrightarrow}  \def\eqr{\eqref}
\def\no{\nonumber} 
\def\mG{{\mathcal G}}
\def\g{\gamma}     \def\b{\beta}
  \def\th{\theta} 
\def\k{\kappa} \def\l{\lambda} \def\a{\alpha} 
\def\Lm{L\'evy measure }  
\def\lm{\Pi}  \def\ith{it holds that }
  \def\r{r} \def\s{\sigma}   
  \def\bc{\begin{cases}
  }    
\def\ec{\end{cases}} \def\expo{exponential } \def\CIR{Cox-Ingersoll-Ross }
  \def\qu{\quad} \def\for{\forall}
  \def\beq{\begin{eqnarray}} \def\eeq{\end{eqnarray}}
   \def\be{\begin{equation}} \def\ee{\end{equation}}
\def\bea{\begin{eqnarray*}} \def\prf{{\bf Proof:} }
\def\eea{\end{eqnarray*}} \def\la{\label}
\def\LE{Laplace exponent } \def\LT{Laplace transform} \def\BM{Brownian motion }
\def\LTs{Laplace transforms } \def\q{q} 
\def\le{\left} \def\ri{\right}
\def\nne{nonnegative } \def\fr{\frac}    \def\ta{T_{a,-}}  \def\tb{T_{b,+}} \def\tl{T_{l,-}}
 \def\sec{\section} \def \fe{for example } \def\wlo{w.l.o.g. }  \def\ith{it holds that }
 \def\deF{de Finetti } 
\newtheorem{Thm}{Theorem}
\newtheorem{Ass}{Assumption}
\newtheorem{Lem}{Lemma}
\newtheorem{Exa}{Example}
\newtheorem{Cor}{Corollary}
\newtheorem{Def}{Definition}
\newtheorem{Rem}{Remark}
\newtheorem{Exe}{Exercice}
\def\beXe{\begin{Exe}} \def\eeXe{\end{Exe}}
\def\eeD{\end{Def}} \def\beD{\begin{Def}}
\def\beXa{\begin{Exa}} \def\eeXa{\end{Exa}}
\def\beR{\begin{Rem}} \def\eeR{\end{Rem}}
\def\beL{\begin{Lem}} \def\eeL{\end{Lem}}
\newtheorem{Pro}[Lem]{Proposition}
\def\beP{\begin{Pro}} \def\eeP{\end{Pro}}
\def\beC{\begin{Cor}} 
\def\beT{\begin{Thm}}
  \def\eeT{\end{Thm}}   \def\Mar{Markov }
\def\eeC{\end{Cor}}
\newtheorem{Qu}{Problem}
\def\beQ{\begin{Qu}} \def\eeQ{\end{Qu}}
  \def\sur{survival }
\def\pros{probabilities } 
\def\rvs{random variables } \def\rv{random variable } 
  \def\expo{exponential } \def\pro{probability }
\def\prob{problem } \def\probs{problems}  \def\ts{two-sided exit }
  \def\H{\widehat }
   \def\Ga{\Gamma}
\def\1{\mathop{\rm 1\!\!I}\nolimits}   
 \def\mZ{{\mathcal Z}}  \def\mW{{\mathcal W}}
\newcommand{\pd}[2]{\frac{\partial #1}{\partial #2}}
 \def\Seg{Segerdahl}
\def\prob{problem} \def\probs{problems} \def\proba{probability} \def\str{strong Markov property}\def\expoj{exponential jumps}
 \def\OU{Ornstein-Uhlenbeck } 
 \def\sp{spectrally positive } 
 \newcommand{\md}{\mathrm d}   \def\divs{dividends }
\newcommand{\blue}{\textcolor[rgb]{0.00,0.00,1.00}}
     \def\sd{scale derivative }
      \def\woF{\widehat{\overline{\lm}}}
     \def\Xt{(X_t)_{t \geq 0}} \def\mZ{{\mathcal Z}}
    \def\Lm{L\'evy measure }\def\wr{with respect to }
   \def\Mp{More precisely, } \def\sats{satisfies}  \def\fun{function }  \def\funs{functions}    \def\resp{respectively} \def\proc{process} \def\eq{equation} \def\eqs{equations} \def\cd{(\cdot)} \def\fac{factorization}\def\fno{from now on} \def\upc{up to a constant}\def\std{state dependent }\def\satg{satisfying }\def\c{c}\def\SL{Sturm Liouville }\def\Fq{\Phi_\q}\def\prc{proportionality constant}
\providecommand{\abs}[1]{\left\lvert#1\right\rvert}
\providecommand{\pr}[1]{\left(#1\right)} 
\providecommand{\pp}[1]{\left[#1\right]} 
\newcommand{\figu}[3]{
\begin{figure}[!h]
\begin{center}
{\includegraphics[width=13 cm, height=8 cm]{#1}}
\end{center}

\vspace{-0.2cm}
\caption{\hspace{0.25cm}#2\label{f:#1}}
\end{figure}
}
\begin{document}
\title{A Review of First-Passage Theory for the Segerdahl Risk Process and Extensions}

\newcommand{\orcidauthorA}{0000-0002-2615-0950} 

\author{{Florin Avram} 
 and {Jose-Luis Perez-Garmendia}
 }


\maketitle

\abstract{The Segerdahl process (Segerdahl (1955)), {characterized by}
exponential claims and
affine drift, has drawn a~considerable amount of interest---see, for example, (Tichy (1984); Avram and Usabel (2008); Albrecher et al. (2013); Marciniak and Palmowski (2016)), due to its
economic interest (it is the simplest risk process which takes into account the effect of interest
rates). See  (Albrecher and Asmussen 2010, Chapter 8) for an excellent overview, including extensions to processes with state dependent drift. It is also the simplest
non-Lévy, non-diffusion example of a spectrally negative Markov risk model. Note that for both
spectrally negative Lévy and diffusion processes, first passage theories which are based on identifying
two “basic” monotone harmonic functions/martingales have been developed. This means that for
these processes many control problems involving dividends, capital injections, etc., may be solved
explicitly once the two basic functions have been obtained. Furthermore, extensions to general
spectrally negative Markov processes are possible (Landriault et al. (2017), Avram et al. (2018);
Avram and Goreac (2019); Avram et al. (2019b)). Unfortunately, methods for computing the basic
functions are still lacking outside the Lévy and diffusion classes, with the notable exception  of the Segerdahl process, for which the
ruin probability has been computed (Paulsen and Gjessing (1997). As a consequence,
the $W$  scale function may be computed as well, via simple \proba \ arguments which apply a priori to all \procs\ with \expoj\ (and may be extended to phase-type jumps as well). Further work going beyond \expoj\ and linear drifts has been provided in provided in (Avram and Usabel (2008)) and  (Czarna et al. (2017)), respectively.
However, there is a
striking lack of numerical results in both cases. This motivated us  to review  these approaches, with the
purpose of drawing attention to connections between them, and underlining open problems.}

{\bf Keywords:} {Segerdahl process; affine coefficients; first passage; spectrally negative Markov process; scale functions; hypergeometric functions}

\sec{Introduction and Brief Review of First Passage Theory for Spectrally Negative
\Mar \ Processes}

 To set the stage for our topic and future research,
consider a~\sn
jump diffusion on a~filtered probability space $(\Omega,  \{\mathcal{F}_t\}_{t \geq 0}, P)$, which~ satisfies the~SDE:
\begin{eqnarray}  \label{jd}
d X_t&= & c(X_t) d t + \sigma(X_t) d B_t  - d J_t, \;  J_t=\sum_{i=1}^{N_\l(t)} C_i, \; \for X_t > 0
\end{eqnarray}
and is  {absorbed} 
when leaving a half line $(l,\I)$.\fn[4]{The boundary point $l$ may be a natural barrier, like the largest root of $\s(x)$. Or, when $\s(x)=0$ and $c(x)$ is increasing, $l$  may be the largest root of $c(x)$, called absolute ruin point.
 Or, it can be a point below which the process is artificially killed.}
 Here, $C_i$ are \nne \; i.i.d. \rvs with distribution measure $ F_C(dz)$ and finite mean, $B_t$ is an independent standard Brownian motion, $\s(x) >0,  c(x)>0, \for x > l$, $N_\l(t)$ is an independent Poisson process of intensity $\l$. {The functions $c(x), \; v(x):=\fr{\s^2(x)}2$ and $\lm(d z)=\l F_C(dz)$ are referred to as the
  \lev-Khinchine
 characteristics of $X_t$.}

Note that we assume that all jumps go in the same direction and have constant intensity so that we can take advantage of potential simplifications of the first passage theory in this case.  See  \cite{paulsen2010ruin} and  \cite[Chapter 8]{AA} for further information on risk processes with state dependent drift, and~in particular the two pages of historical notes and references in the last reference.

{ {The Segerdahl process}} is the simplest example outside the spectrally negative \lev and diffusion classes. It is obtained by assuming $v(x)=0$ in \eqr{jd}, and~$C_k$ to be exponential i.i.d random variables with density $f(x)=\mu e^{-\mu x}$--see \cite{Seg} for the case $c(x)=c + rx,  r>0, c \geq 0$, and see also~\cite{Tichy84} for nonlinear $c(x)$.
The \Seg\ \ \proc \ \sats\ thus the SDE
\bea d X_t= \pr{\c + \r X_t} dt -d \pr{\sum_{i=1}^{N_t}C_i}, \qu  \r>0, \c \geq 0. \eea
It is  given explicitly by
\begin{align*}
X_t=X_0+\r \int_0^{t}X_s ds+K_t, \quad \Eq \quad
X_t=X_0e^{ \r  t} +e^{ \r  t} \int_0^t e^{-\r  s} d K_s,
\end{align*}
with $K_t=\c t  -\sum_{i=1}^{N_t}C_i$ being a Cram\'er-Lundberg process, whose Laplace exponent is
\[
\k(\theta):=\frac{1}{t}\log \mathbb{E}[e^{\th K_t}]=\c \th + \lambda(\frac{\mu }{\mu +\th}-1).
\]

\beR An essential point for the Segerdahl process is the fact that the point  $-\fr{\c}{\r}
$  is an \abs ruin~level, in the sense that after a jump below this point, the process will never cross back. We may assume \wlo that the \abs ruin~level is $0$, or, equivalently, that $\c=0$. \eeR

{{First passage theory}} concerns the first passage times above and below fixed levels. For any process $\Xt$, these are defined by
\begin{equation}\la{fpt}
\begin{aligned}
\tb &= \tb^{X}=\inf\{t\geq 0: X_t> b\},\\
T_{a,-}&=T_{a,-}^X=\inf\{t\geq 0: X_t< a\},
\end{aligned}
\end{equation}
with $\inf\emptyset=+\infty$, and~the upper script $X$ typically omitted. Since $a$ is typically fixed below,
we will often write for simplicity $T $ instead of $\ta$.

 First passage times are important in the control of reserves/risk
 processes. The~rough idea is that when below low levels $a$, reserves processes should be replenished at some cost, and~when above high levels $b$, they should be partly invested to yield income---see, for example, the comprehensive textbook \cite{AA}.

 The most important first passage functions are the  two-sided upward and downward exit functions from a~bounded interval $[a,b]$, defined respectively by

\beq \la{Rui}
\bc \sRui^{b}_{q}(x,a)  &  :=E_{x}\left[  e^{-q\tb}{\mathbf 1}_{\left\{
\tb<\ta\right\}  }\right]=P_{x}\left[ \tb<\min(\ta, \kil_q)\right] \\
\Rui_{q}^{b}(x,a)  &  :=E_{x}\left[  e^{-q\ta}{\mathbf 1}_{\left\{  \ta<\tb\right\}  }\right]=P_{x}\left[ \ta<\min(\tb, \kil_q)\right]\ec  \; \;  q\geq0, a\leq x\leq b,
\eeq
where $\kil_q$ is an~independent \expo \rv of rate $q$. We will call them (killed) {{survival}} and {{ruin}} probabilities, respectively\footnote{See \cite{Ivakil} for a~nice exposition of killing.}, but the qualifier killed will be usually dropped below. The~absence of killing will be indicated by omitting the subindex $q$.
Note that in the context of potential theory, \eqr{Rui} are called equilibrium potentials \cite{blumenthal2007markov} (of the capacitors $\{b,a\}$ and~$\{a,b\}$).

{{\bf Beyond \rps: scale functions, dividends, capital gains, etc.} Recall that for ``completely asymmetric \lev" \procs, with jumps going all in the same direction, a~large variety of \fp {\probs \ may} be reduced to the computation of the two monotone ``scale functions'' $W_\q,Z_\q$---see, \fe, \cite{Suprun,Ber97,Ber,AKP,APP,APP15,APY,IP,AIZ,LZ17,LP,AZ}, and see \cite{AGV} for a~recent compilation of more than 20 laws expressed in terms of $W_\q,Z_q$.}

For example, for {spectrally negative L\'evy processes},
the {killed survival} probability has a~well known simple factorization\footnote{The fact that the survival probability has the multiplicative structure~\eqr{Wfac} is equivalent to the absence of positive jumps, by~the strong Markov property; this is the famous ``gambler's winning'' formula \cite{Kyp}.}:
\beq \la{Wfac} \sRui_{\q}^{b}(x,a)  &  =\fr{W_{\q}(x-a)}{W_{\q}(b-a)}.
\eeq

For a~second example, the~\cite{deF} discounted dividends fixed barrier objective for {spectrally negative L\'evy processes}  has a~simple expression in terms of either the $W_\q $ scale function or of its logarithmic derivative $\nu_{\q}=\fr{W'_\q}{W_\q}$ \cite{APP}.\footnote{$\nu_\q$ may be  more useful than $W_\q$ in the \sn \Mar framework \cite{AG}.}
   \begin{equation} V^{b}(x)= \bc \frac{W_\q(  x)}{W_{\q}^ {\prime}(b)}=e^{-\int_{x}^{b} \nu_\q(m)dm}\fr 1{\nu_\q(b)}& 0 \leq x \leq b\\V^{b}(x)=x-b+ V^{b}(b) & x>b \ec. \la{divL} 
\end{equation}

Maximizing over the reflecting barrier $b$ is simply achieved by finding the roots of
\beq \la{smf} W_\q'' (b) =0 \Eq \pd {}{b} \Big[\fr 1{\nu_\q(b)} \Big]=\pd {}{b} \Big[ V^{b}(b)\Big]=1.\eeq


 Since~results for \sn \lev \procs \; (like the \deF \prob \; mentioned above)
require often not much more than the \str, it is natural to attempt to extend them
to the \sn strong Markov case.
As~expected, everything worked out almost smoothly for ``{\lev}-type cases'' like random walks \cite{AV}, Markov additive processes \cite{IP}, etc. Recently, it~was discovered that $W,Z$ formulas continue to hold a~priori for spectrally negative \Mar processes \cite{LLZ17b}, \cite{ALL}. The main difference is that in equations like \eqr{Wfac}, $W_{\q}(x-a)$  must be replaced by a two-variable \fun \ $W_{\q}(x,a)$  (which reduces in the \lev case to $W_{\q} (x,y)=\T W_{\q} (x-y)$, with $\T W_q$ being the scale function of the \lev process). The same holds of course for the second scale function $Z_{\q,\th}(x-a)$ \cite{APP15,IP}. This unifying structure has lead to recent progress for the optimal \divs \ problem for \sn \Mar \procs \ -- see \cite{AG};
however,  as of today, we~are not aware of other  results on the control of the process \eqr{jd} which have succeeded to exploit the $W,Z$ formalism.

 Several approaches may allow handling particular cases of \sn \Mar \procs:

\begin{enumerate}
\item {for the \Seg\ ~ \proc, the~direct IDE solving approach is successful
for computing  the \rp  ---see (\cite{Pau}}) and Theorem 1, Section \ref{s:dir}.

\item{for \lev driven Langevin-type processes, renewal \eqs \ have been provided in \cite{CPRY}} ---see Section \ref{ex:fb}.
\item for processes with  operators having affine drift and volatility, an~explicit integrating factor for the \LT \ may be found in \cite{AU}---see Section \ref{s:AU}.
\item with phase-type jumps, there is Asmussen's embedding into a~regime switching diffusion {{\cite{asmussen1995stationary}}---see Section \ref{s:CLSa}, and~the complex integral representations of {\cite{JJ}}.}\\
\end{enumerate}

We will review and complete here the first approach, and also review and discuss the second  and third approaches.
Asmussen's approach is also recalled, because we believe it has considerable potential.

We end this introduction with  an~example of {a} still open problem we would like to solve in the future:
 \beQ Find the  optimal dividend policy for the \Seg\ process in the presence of capital injections and bankruptcy (in particular, investigate   the extensions of Equations \eqr{divL} and \eqr{smf}).
 \eeQ

{\bf {Contents}}. Section \ref{s:ALL} explains the simplicity of \sn \Mar \procs \ with negative exponential and phase-type jumps, already sketched in \cite{ALL}.

{Section \ref{s:dir} reviews  the direct classic Kolmogorov approach for solving \fp problems. The~discounted \rp ($q>0$)   \eqr{Pau} for the \Seg~ process is obtained, following \cite{Pau}, by transforming the \ren \eqr{reneq} into the ODE \eqr{Paueq}, which~is hypergeometric of order $2$.\footnote{This result due to Paulsen has stopped short further research
for more general
mixed exponential jumps, since it seems to require a~separate ``look-up'' of hypergeometric solutions for each particular
problem.} We also  complete the study of this process by providing its $W$ scale function, using the results in section \ref{s:ALL}.}

 {Section \ref{ex:fb} reviews the recent approach based on renewal
equations due to \cite{CPRY} (which needs still be justified for increasing premiums satisfying \eqr{intc}). An important renewal (Equation \eqr{reneqC}) for the ``scale derivative'' $\w$ is recalled here, and~a~new result relating the scale derivative to the { integrating factor} defined in  \eqr{defI0} is offered---see Theorem 4.}

 {Section \ref{s:AU} reviews older computations of \cite{AU} for more general \procs \ with affine operator, and~provides explicit formulas for the \LTs \ of the \sur and \rp \eqr{seg0q}, in~terms of the same{ integrating factor} \eqr{defI0} and its antiderivative.}

 {Section \ref{s:Segr} checks that our integrating factor approach recovers various results for Segerdahl's process, when $\q=0$ or $x=0$.}

 {Section \ref{s:CLSa} reviews Asmussen's approach for solving \fp problems with
phase-type jumps, and~illustrates the simple structure of the \sur and \rp of the Segerdahl-Tichy process, in~terms of the scale derivative $\w$. This approach yields quasi-explicit results when $q=0$.}

 {Section \ref{s:a} reviews necessary hypergeometric identities (used in section \ref{s:dir}).}

 {Finally, Section \ref{s:con} outlines further promising directions of research.}

\sec{Two-sided \fp \pros \ for \sn \Mar \procs \ with negative exponential and phase-type jumps reduce to computing two one sided  \fp \funs~$H_\q,\Rui_\q$  \la{s:ALL}}
\beR The results in this section apply a priori to a  large class of  \sn \Mar \procs \ \eqr{jd} with negative exponential  jumps, and are formulated as such.  In particular, \BM\ may be present, and hence creeping downwards is possible. \eeR
Our study of  \sn \Mar \procs \ with negative exponential  jumps is based on an increasing
 $q$- harmonic function of our process $H_\q(x), l \leq x $ \satg\ \eqr{H}, and a  decreasing one $ \Rui_{\q}(x,a), x \geq a$,  defined in  \eqr{Rui}.
For the \Seg\ \proc,
  these  functions, to be denoted by $K_1(x), K_2(x), x \geq  0$,  turn out  to be related to the   increasing and decreasing Kummer hypergeometric functions    $M$ and $U$, \resp. Note that $K_1(0)=0$, which renders \eqr{H} immediate. Subsequently, other
 useful functions like $W_q(x,a),Z_q(x,a)$ will be  identified by simple \proba \ arguments which apply a priori to all \procs\ with \expoj, and may be extended to phase-type jumps -- see Lammas 1,2. With $W$ and $Z$ computed, one may hope to solve    complicated control \probs\ involving   dividends and capital injections, by  applying similar arguments as in the \lev case.

 Our first step  is to investigate the existence of a \fac\ formula for two-sided  \fp \pros \ {\bf upwards}, with lower limit at the boundary domain $l$:
\be \sRui_{\q}^b(x,l)=\mathbb{E}_x\left[ e^{-q\tb}; \tb \leq\tl\right]:=
\frac{W_{\q}(x,l_-)}{W_{\q}(b,l_-)}:=\frac{H_{\q}(x)}{H_{\q}(b)}, l \leq  x \leq b,\label{H} \ee
which defines the \fun\ $H_q$, \upm, and up to the existence of the limits when $x\to l_-$ (the latter is a delicate point which must be resolved separately in each particular case).
 The existence of such a function $H$ is suggested by spectral negativity and  the \str. To emphasize that this property holds a priori outside the \lev framework, we provide a justification, based on the trick of adding a point $c >b$, and starting with
\bea \sRui_{\q}^{c}(x,l)=\sRui_{\q}^{b}(x,l) \sRui_{\q}^{c}(b,l),\eea
where only the absence of positive jumps and
the strong Markov property  were used. Therefore,
\bea \sRui_{\q}^{b}(x,l)=\fr{\sRui_{\q}^{c}(x,l)}{ \sRui_{\q}^{c}(b,l)};\eea
So, the quotient decomposition is trivial as long as we stay  on  a fixed interval $[0,c]$, with $c$ arbitrarily big.

 In the \lev case, the dependence on $l$ cancels, and $H_\q(x)=e^{\Fq x}$, where $\Fq$ is the unique \nne\ root of the \CL equation. For diffusions, $H_{\q}(x)$ is the increasing solution of the \SL equation $(\mG-\q) f(x)=0$ (see \fe \cite{BS}). In the general \std\ case, to provide a \fac\  independent of $c$, it suffices  to obtain an increasing
 $q$- harmonic function of our process $H_\q(x),  l \leq x $ \satg\ $H_q(0)=0$, as is the case with the \Seg\ \proc; Doob's optional stopping theorem yields then the \fac.

 When the claims are exponential, computing  \ts \fp \pros\ on $[a,b], a \geq l$, may be reduced to computing $H_\q(x), x \geq l,$ and the \fp ruin \pros\ $\Rui(x,a)$,  cf. \cite{ALL}.
 \beL Let $\Rui_{\q,J}(x,a), \Rui_{\q,C}(x,a)$  denote the killed \rps\ by jump and by creeping, \resp\ (more precisely \LTs, but \LTs are just \rps\ \wr a process where the inter-arrivals are killed-- see \cite{Ivakil} for generalizations and applications to risk theory). For the process \eqr{jd} with negative \expoj, no upward jumps and $a>l$, \ith
 \be\sRui_{\q}^b(x,a)  =\frac{H_\q(x)-\Rui_{\q,C}(x,a)H_\q(a)-\Rui_{\q,J}(x,a)\int_0^{ {a- l}} \mu  e^{-\mu  y}H_\q(a-y)dy}{H_\q(b)-\Rui_{\q,C}(b,a)H_\q(a)-\Rui_{\q,J}(b,a)\int_0^{ {a- l}} \mu  e^{-\mu  y}H_\q(a-y)dy}:=\frac{W_\q(x,a)} {W_\q(b,a)}. \la{We}
 \ee

 Also
\begin{align}
\Rui_{\q}^b(x,a,dy) &=\mathbb{E}_{x}\left[  e^{-q\ta} 1_{\left\{  \ta<\tb , a -X_{\ta} \in dy \right\}  }\right] \no \\&=\Rui_{\q}(x,a,dy) -\sRui_{\q}^b(x,a) \Rui_{\q}(b,a,dy) =\pp{\Rui_{\q}(x,a) -\frac{ W_q(x,a)}{ W_q(b,a)} \Rui_{\q}(b,a)}\; {\mu} e^{-\mu y} dy , \la{Ze}
\end{align}
and \begin{align}
\Rui_{\q}^b(x,a,\th) &=\pp{\Rui_{\q}(x,a) -\frac{ W_q(x,a)}{ W_q(b,a)} \Rui_{\q}(b,a)}\; \fr{\mu} {\mu +\th} . \la{Zet}
\end{align}

\eeL
\beR When $a=l$ this result holds as well, provided that $H_{\q}(l)=0$. \eeR
{\prf}
  \beq \la{12e}
&&\sRui_{\q}^b(x,a)  =\mathbb{E}_{x}\left[  e^{-q\tb}1_{\left\{\tb<\ta\right\}  }\right] \no\\
&&= \mathbb{E}_{x} \left[ e^{-q \tb} \right] -
\mathbb{E}_{x}\left[e^ {-q\ta }1_{\{X_{\ta}=a, \ta<\tb\} }\right]
\mathbb{E}_{a}\left[ e^{-q \tb}\right]\no \\&&
-\int_0^{ {a- l}}\mathbb{E}_{x}\left[e^ {-q\ta }1_{\{a-X_{\ta}\in dy, \ta<\tb\} }\right] \mathbb{E}_{a-y}\left[ e^{-q \tb}\right]\no \\
&&= \sRui_\q^b(x)  -
\Rui_{\q,C}^b(x,a)
\sRui_\q^b(a)
-\Rui_{\q,J}^b(x,a)\int_0^{ {a- l}} \sRui_\q^b(a-y) \mu  e^{-\mu  y} \md y \no\\
&& =\frac 1 {H_\q(b)}
\Big[H_\q(x)  -\Rui_{\q,C}^b(x,a) H_\q(a)
-\Rui_{\q,J}^b(x,a)\int_0^{ {a- l}} H_\q(a-y) \mu  e^{-\mu  y} \md y\Big] \la{int}
\eeq
where $\Rui_{\q,C}, \Rui_{\q,J}$ denote respectively ruin by creeping and by jumps.

Similarly,
\bea  \Rui_{\q,C}^b(x,a)=\Rui_{\q,C}(x,a)-\sRui_{\q,J}^b(x,a)
\Rui_{\q,C}(b,a), \Rui_{\q,J}^b(x,a)=\Rui_{\q,J}(x,a)-\sRui_{\q,J}^b(x,a)
\Rui_{\q,J}(b,a).\eea

  Plugging the last equality into \eqr{int} and putting
  $$W_q(x,a)=H_\q(x)-\Rui_{\q,C}(x,a)H_\q(a)-\Rui_{\q,J}(x,a)\int_0^{ {a- l}} \mu  e^{-\mu  y}H_\q(a-y)dy$$
  yields
  \bea
&&{H_\q(b)} \sRui_{\q}^b(x,a)   =
W_\q(x) +\sRui_{\q}^b(x,a) \pp{H_\q(a) \Rui_{\q,C}(b,a)
+ \Rui_{\q,J}(b,a) \int_0^{ {a- l}} H_\q(a-y) \mu  e^{-\mu  y} \md y}
\eea
and solving for $\sRui_{\q}^b(x,a)$ yields
\beq&&\sRui_{\q}^b(x,a)  =\frac{H_\q(x)-\Rui_{\q,C}(x,a)H_\q(a)-\Rui_{\q,J}(x,a)\int_0^{ {a- l}} \mu  e^{-\mu  y}H_\q(a-y)dy}{H_\q(b)-\Rui_{\q,C}(b,a)H_\q(a)-\Rui_{\q,J}(b,a)\int_0^{ {a- l}} \mu  e^{-\mu  y}H_\q(a-y)dy}. \la{We}
 \eeq

 \beR
\eqr{12e}, \eqr{Ze} show that,  with  negative \expo \ jumps, both \ts \fp \pros \ may be constructed using three functions  $H_\q, \Rui_{\q,C}, \Rui_{\q,J}$ from the one-sided theory. If down-crossing continuously is impossible, only two functions  $H_\q,  \Rui_{\q,J}$  are necessary.

The extension to downwards jumps of phase-type $(\vec \b, B)$ (a dense family) is immediate. \eeR
\beL  For \procs \ with downwards jumps of phase-type $(\vec \b, B)$ \eqr{12e} becomes:
\beq
&&\sRui_{\q}^b(x,a)  =\mathbb{E}_{x}\left[  e^{-q\tb}1_{\left\{\tb<\ta\right\}  }\right] \no \\&&=\frac{H_\q(x)-\Rui_{\q,C}(x,a)H_\q(a)-\vec \Rui_{\q,J}(x,a)\int_0^{ {a- l}}   e^{B  y} \bff b H_\q(a-y)dy}
{H_\q(b)-\Rui_{\q,C}(b,a)H_\q(a)-\vec \Rui_{\q,J}(b,a)\int_0^{ {a- l}}   e^{B  y} \bff b H_\q(a-y)dy}:=\frac{ W_q(x,a)}{ W_q(b,a)}, \la{WeP}
 \eeq 
 where $\vec \Rui_{\q,J}(x,a)$ is the lign vector of \rps \ whose $k$-th component is the \rp \ when crossing of $x$ axis occurs during phase $k$, and $\bff b=(-B) \bff 1$ .

 Similarly, \eqr{Ze} becomes
 \be \Rui_{\q}^b(x,a,dy) = \pp{\vec \Rui_{\q}(x,a) -\sRui_{\q}^b(x,a) \vec \Rui_{\q}(b,a)} e^{ B y} \bff b dy . \la{ZeP} \ee
 \eeL
 \prf The same ideas as in the \expo case apply, except that
 now we must take into account the ``conditional memory-less property  of phase-type variables":
 \beq \la{12eP}
&&\sRui_{\q}^b(x,a)  = \sRui_\q^b(x)  -
\Rui_{\q,C}^b(x,a)
\sRui_\q^b(a)
-\int_0^{ {a- l}}\sum_{k=1}^K \mathbb{E}_{x}\left[e^ {-q\tb }1_{\{\ta<\tb, J_c=k,X_{\ta}\in a-dy, \tb <\I\} }\right],\no \eeq
where $J_c$ is the phase when down-crossing $a$.  Now the last term may be written as
\bea&&\sum_{k=1}^K \int_0^{ {a- l}} \mathbb{E}_{x}\left[e^ {-q\tb }1_{\{\ta<\tb, J_c=k,X_{\ta}\in a-dy, \tb <\I\} }\right]=\\&&\sum_{k=1}^K \int_0^{ {a- l}} \mathbb{E}_{x}\left[e^ {-q\ta }1_{\{\ta<\tb, J_c=k,X_{\ta}\in a-dy\}} \right] \mathbb{E}_{a-y}\left[ e^{-q \tb}; \tb <\I \right]   \\
&&\sum_{k=1}^K \Rui_{\q,J}(x,a,k) \int_0^{ {a- l}} \pp{e^{B y} \bff b}_k    \sRui_\q^b(a-y)  \md y=
\vec \Rui_{\q,J}(x,a)\int_0^{ {a- l}}   e^{B  y} \bff b \fr{H_\q(a-y)dy}{H_\q(b)},\eea
where $\Rui_{\q,J}(x,a,k)$ denotes the \rp with crossing in phase $k$,
and where the conditional memory-less property was applied.

The rest of the proof must be modified similarly. \qed
 \beR

 Note that the  formula
$$H_\q(x)=  W_q(x,a)+\Rui_{\q,C}(x,a)H_\q(a)+\vec \Rui_{\q,J}(x,a)  \int_{0 }^{ {a- l}}     e^{B  z} \bff b H_\q(a-z)dz$$ has a clear heuristic probabilistic interpretation:  {the ``total weight", starting from $x$, of all paths converging to $\I$  equals the ``total weight" of all paths not reaching $a$ + the ``total weight" of all paths  dropping to some $a-z >l, 0 \leq z$,  and converging to $\I$ afterwards}. Note  that in the presence of a  lower limit $l$,  converging to $\I$ in the heuristic may be replaced by never reaching $l$.

\eeR

\section{Direct Conversion to an~ODE of Kolmogorov's Integro-Differential Equation for the Discounted Ruin Probability} \la{s:dir}

One may associate to the process \eqr{jd}
 a~Markovian semi-group with generator
\begin{equation*} \label{genrisk}
\mG h(x) = v(x) h''(x) + c(x) h'(x) + \int_{(0,\I)}
[h(x-y) - h(x) ] \lm(d y)
\end{equation*}
acting
on $h\in C^2_{(0,\I)}$, up to the minimum between its explosion and exit time {$T_{0,-}$}. 

The classic approach for computing the ruin, survival, optimal dividends, and other similar functions
 starts with the \wk \ Kolmogorov integro-differential equations associated to this operator. With  jumps having a rational \LT, one may remove the integral term in \Kol's \equ above by applying to it the differential operator $n(D)$ given by the denominator of the \LE $\k(D)$. For example, with \expo claims, we would apply the operator
 $\mu +D$.

 \ssec{Ruin probabilities for Segerdahl's Process with Exponential Jumps {Paulsen and
  Gjessing} (1997), ex. 2.1 \la{s:PG1}}

 When $v(x)=0$ and $C_k$ in \eqr{jd} are exponential i.i.d random variables with density $f(x)=\mu e^{-\mu x}$, the Kolmogorov integro-differential \eq \ for the \rp \ \ is:
 {\beq && \la{reneq} c(x) \Rui_\q(x,a)' + \lambda \mu \int_a^x e^{-\mu(x-z)} \Rui_\q(z,a) dz  -(\lambda +q) \Rui_\q(x,a)+ \l e^{-\mu x}=0, \no \\&& \Rui_\q(b,a)=1, \Rui_\q(x,a)=0, x<a.\eeq}

To remove the convolution term $\Rui_\q* f_C$, apply the operator
 $\mu +D$, which~replaces the convolution term by $\l \mu \Rui_\q(x)$\footnote{More generally, for any \PH jumps $C_i$ with \LT $\H f_C(s)=\fr{a(s)}{b(s)}$, it may be checked that $\Rui_\q* f_C=\H f_C(D) \Rui_\q$ in the sense that $b(D) \Rui_\q* f_C=a(D) \Rui_\q$,~thus removing the convolution by applying the denominator
 $b(D)$.} yielding finally
 \bea \le(c(x) D^2 + (c'(x) + \mu c(x)-(\lambda + \q)) D - \q \mu)\ri) \Rui_\q(x) =0\eea

When $c(x)=c+ r x, a=0, b=\I$, the \rp \sats:
\begin{eqnarray} \la{Paueq}
&&\le[(\Tc + x) \,D^2 + (1+ \mu (\Tc +x)  -\Tq - \Tl) D - \mu
\Tq\ri]
 \Rui_{q}(x)  =0, \no \\&&(-c  \,D +   \l +\q)
 \Rui_{q}(0)  =\l\footnote{this is implied by the Kolmogorov integro-differential equation $(\mG -\l - q) \Rui_\q(x)+ \lambda \ovl F(x)=0, x \geq 0$}, \quad \Rui_{q}(\I)=0 \la{bc} \eeq
 see \cite[{(2.14)},{(2.15)}]{Pau}, where $ \Tl =\fr{\lambda}r,
\Tq =\fr{\q}r$, and~$-\Tc:=-\fr{c}r$ is the \abs ruin~level.

Changing the origin to $-\Tc$ by $z=\mu(x+ \Tc),  \Rui_{q}(x)=y(z)$ brings this to the
 form
 {  \begin{equation} \la{conhypeq}
 z y''(z) + (z+1-n) y'(z) - \Tq y(z)=0, \; n= \Tl + \Tq,   \ee}

\noindent(we corrected here two wrong minuses in \cite{Pau}),
 which corresponds to the process killed at the absolute ruin, with claims rate $\mu=1$.
 Note that the (Sturm-Liouville) Equation~\eqr{conhypeq} intervenes also in the study of the squared radial \OU diffusion (also called \CIR process) \cite[p. 140, Chapter II.8]{BS}.

 { Let $K_i(z)=K_i(\Tq,n,z), i=1,2, n=\Tq + \Tl$ denote the
 (unique up to a~constant) increasing/decreasing solutions for $z\in (0,\I)$ of the confluent hypergeometric Equation \eqr{conhypeq}. The~solution of \eqr{conhypeq} is thus}
 {  \beq \la{Ki}  c_1 K_1(\Tq,n, z) + c_2 K_2(\Tq,n, z)=c_1 z^n e^{-z}M(\Tq+1,n+1,z)  +c_2 z^n e^{-z}
   U(\Tq+1,n+1,z),\eeq}

\noindent where \cite[{13.2.5}]{AS}
$U[a, a+c, z] = \fr 1{\Gamma[a]} \int_0^\I e^{-z t} t^{a - 1} (t + 1)^{c - 1} dt,  Re[z] > 0, Re[a] > 0$ is Tricomi's decreasing hypergeometric U function
and
   $M\le(a,a+c,z\ri)= \, _1 {F}_1 (a,a+c;z)
 $ is Kummer's increasing \nne \ confluent hypergeometric function of the
first kind.\footnote{$M(a,b,z)$ and $U(a,b,z)$ are the increasing/decreasing solutions of the 
to
 Weiler's canonical form of Kummer equation $z f''(z) + (b-z) f'(z) - a~f(z)=0$, which~is obtained via the substitution $y(z) =e^{-z} z^{n} f(z),$ with $a=\Tq +1, b=n +1$. Some computer systems use instead of $M$ the Laguerre function defined by $M(a,b,z)=L_{-a}^{b-1}(z)\fr{\Gamma(1-a) \Gamma(b)}{\G(b-a)}$, which~yields for natural $-a$ the Laguerre polynomial of degree $-a$.}

  The killed ruin probability must be  combination of $U$ and $M$, but the fact that it decreases to $0$
suggests the absence of the function $K_1$. The~next result shows that this is indeed  the case: the \rp is proportional to $K_2(\mu(x+ c(a)/r))$   on an~arbitrary interval $[a,\I), a>-\Tc$.
$K_1$ yields the absolute \sur \pro \ (and scale function) on $[-\Tc,\I)$, but over an~arbitrary interval we must use a~combination of $K_1$ and $K_2$.

\begin{Thm} \la{t:W}

  Put $z(x)=\mu (\Tc +x), \Tc=c(a)/r$. The~\rp on $[a,\I)$ is
{\begin{equation}\label{Pau}
\Rui_{q}(x,a) =E_x [e^{-q \ta}]=\fr{\Tl}{\Tc \mu}  \frac{e^{-\mu
  x }(1+ x/\Tc)^{(\Tq+\Tl) }\; U\big(1+ \Tq, 1+ \Tq+\Tl,\mu (\Tc +x)\big)}
{U\big(1+ \Tq, 2+ \Tq+\Tl,\mu \Tc \big)}
\end{equation}}

\noindent (when $q=0,$ $K_2(0,n,z)=\G(\Tl,z)$ and \eqr{Pau} reduces to $\fr{\G(\Tl,\mu (\Tc +x))}{\G(\Tl+1,\mu \Tc ))})$.\footnote{Note that we have corrected Paulsen's original denominator by using the identity \cite[{13.4.18}]{AS}
 $ U[a-1, b, z]+(b-a) U[a, b, z]=z U[a, b+1,z], a>1.$}
\end{Thm}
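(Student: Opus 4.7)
\textbf{Proof plan for Theorem \ref{t:W}.} The plan is to solve the Kolmogorov integro-differential \eqr{reneq} on $[a,\infty)$ subject to the decay condition $\Rui_\q(x,a)\to 0$ as $x\to\infty$, and then recognise the resulting expression in terms of Tricomi's $U$. First, following the preamble to \eqr{Paueq}, I would eliminate the convolution term in \eqr{reneq} by applying the operator $\mu+D$; this reduces the integro-differential equation to the second order ODE \eqr{Paueq}. The affine change of variable $z=\mu(\Tc+(x-a))$ (mapping $x=a$ to $z=\mu\Tc$ and the \abs ruin level $x=a-\Tc$ to $z=0$) together with the substitution $\Rui_\q(x)=e^{-z}z^n f(z)$, $n=\Tl+\Tq$, brings \eqr{Paueq} into Weiler's canonical form of Kummer's equation, whose general solution is \eqr{Ki}, namely $c_1 K_1(\Tq,n,z)+c_2 K_2(\Tq,n,z)$, corresponding to the pair $(M,U)$.

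Next, I would fix the two constants from the two boundary conditions. The decay at $+\infty$ eliminates $K_1$: indeed, by the standard asymptotics $M(\Tq+1,n+1,z)\sim \G(n+1)/\G(\Tq+1)\,e^z z^{\Tq-n}$, the $K_1$ branch $z^n e^{-z}M(\Tq+1,n+1,z)$ grows like $z^\Tq$, whereas the $K_2$ branch $z^n e^{-z}U(\Tq+1,n+1,z)\sim z^{-1}e^{-z}$ decays, as required. Hence $\Rui_\q(x,a)= c_2\,K_2(\Tq,n,z(x))$. The remaining constant $c_2$ is determined by letting $x\downarrow a$ in \eqr{reneq}: the convolution term vanishes and one obtains the one-point condition
\[
c(a)\,\Rui_\q'(a,a)-(\l+\q)\,\Rui_\q(a,a)+\l=0,
\]
which translates into a linear relation between $U(\Tq+1,n+1,\mu\Tc)$ and its derivative at $z=\mu\Tc$.

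Finally, and this is the main bookkeeping step, I would use the Kummer contiguous identity $U[a-1,b,z]+(b-a)U[a,b,z]=z\,U[a,b+1,z]$ of \cite[13.4.18]{AS}, together with the differentiation rule $\pd{}{z}U(a,b,z)=-a\,U(a+1,b+1,z)$, to collapse the combination of $U(\Tq,n+1,\mu\Tc)$ and $U(\Tq+1,n+1,\mu\Tc)$ produced by the previous step into the single value $U(1+\Tq,2+\Tq+\Tl,\mu\Tc)$ appearing in the denominator of \eqr{Pau}; undoing the substitution reinstates the prefactor $e^{-\mu x}(1+x/\Tc)^{\Tq+\Tl}$ coming from $z^n e^{-z}$, and collecting the leftover constants produces the overall $\Tl/(\Tc\mu)$. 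As a consistency check, when $\q=0$ the identity $U(1,1+\Tl,z)=z^{-\Tl}e^z\,\G(\Tl,z)$ reduces \eqr{Pau} to $\G(\Tl,\mu(\Tc+x))/\G(\Tl+1,\mu\Tc)$, recovering the classical undiscounted formula. The hard part is the identity-juggling in this last step: the Tricomi identity used here is precisely the one that corrects the denominator in Paulsen's original statement, and matching the prefactors requires careful tracking of the exponential and power terms introduced by the change of variable.
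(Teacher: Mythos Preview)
Your proposal is correct and follows essentially the same route as the paper: reduce the IDE to the hypergeometric ODE \eqr{Paueq}--\eqr{conhypeq}, discard the $K_1$ branch via the asymptotics of $M$ and $U$ at infinity, fix the remaining constant through the left-endpoint condition $c(a)\Rui_\q'(a,a)-(\l+\q)\Rui_\q(a,a)+\l=0$, and then collapse the resulting combination of Tricomi functions using contiguous relations. The only cosmetic difference is that the paper packages the last simplification via the derived identity $U(a,b,z)+b\,U(a+1,b+1,z)=z\,U(a+1,b+2,z)$ (obtained from \cite[13.4.16,\,13.4.18]{AS} together with the derivative formula \eqr{Kids}), whereas you invoke 13.4.18 and the differentiation rule directly; also, the intermediate pair that actually appears is $U(\Tq,n,\mu\Tc)$ and $U(\Tq+1,n+1,\mu\Tc)$ rather than the $U(\Tq,n+1,\cdot)$, $U(\Tq+1,n+1,\cdot)$ you wrote, but this is a harmless slip in bookkeeping.
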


\begin{proof}
  Following \cite[ex. 2.1]{Pau}, note that the limits $\lim_{z\to \I} U(z)=0, \lim_{z\to \I} M(z)=\I$ imply
  \bea \Rui_{q}(x)= k \; K_2(z)=k {e^{-
z} z^{\T \q + \Tl} U\left( \T \q+1%
,\T \q +\Tl+1,z\right) },  \qu z=\mu(x + \Tc).\eea

The proportionality constant $k$ is obtained from the boundary condition \eqr{bc}.
Putting $G_b [h](x):= [c(x) (h)'(x)- (\lambda + q) h(x)]_{x=0},$
 \bea  G_b [ \Rui_q](x)+\lambda=0 \Lra k=\fr{\l}{-G_b [K_2](z(x))},  \eea
\bea  =-e^{-z} z^{\Tq + \Tl+1}U\left(
\Tq+1,\Tq +\Tl+2,z \right)\eea

Putting $z_0= \mu c$, we find
\bea &&-G_b [K_2](z(x))={z_0 e^{-z_0}
z_0^{\T \q + \Tl-1} U \le(\T \q, \T \q + \Tl, z_0\ri)  +(\T \q + \Tl)e^{-z_0}
z_0^{\T \q + \Tl}U \le(\T \q+1, \T \q + \Tl+1, z_0\ri)}\\&&={ e^{-z_0}
z_0^{\T \q + \Tl} (U \le(\T \q, \T \q + \Tl, z_0\ri)  +(\T \q + \Tl)U \le(\T \q+1, \T \q + \Tl+1, z_0\ri))}, \eea
where we have used the
identity \cite[p. 640]{BS}
  \beq
 K_2'(z)&=&- e^{-z}
z^{\T \q + \Tl-1} \
 U \le(\T \q, \T \q + \Tl, z\ri). \la{Kids}\eeq

 This may be further simplified to
 \bea &&-G_b [K_2](z(x))={ e^{-z_0}
z_0^{\T \q + \Tl+1}  U \le(\T \q+1, \T \q + \Tl+2, z_0\ri))},\eea
by using the identity
 \beq \la{sPau} U[a, b, z]+b U[a+1, b+1, z]=z U[a+1, b+2,z], a>1,\eeq
 which is itself a~consequence of the identities \cite[{13.4.16,13.4.18}]{AS}
 \beq \la{abr} &&(b - a) U[a, b, z] +
 z  U[a, 2 + b, z] = (z + b) U[a, 1 + b,
   z]\\&&U[a, b, z]+(b-a-1) U[a+1, b, z]=z U[a+1, b+1,z]\eeq
 (replace $a$ by $a+1$ in the first identity, and~subtract the second).

 Finally, we
obtain:
\begin{eqnarray*} \label{Pauruin}
&&\Rui_{q}(x)= \left( \frac{\Tl}{\Tc \mu }\right) \frac{e^{-
\mu  x} (1+ \fr{x}{\Tc} )^{\T \q + \Tl} U\left( \T \q+1%
,\T \q + 1+\Tl,\mu  (x + \Tc )\right) }{U\left( \T \q+1,\T \q +1+
\Tl+1,\mu \Tc  \right) }\\&&=\left( \frac{\Tl}{ \mu }\right)
\frac{\int_x^{\infty} ({s-x})^{\T \q} \, ({s}+{\Tc} )^{ \Tl-1} \,
 e^{- \mu  s}
 {d s} }{\int_{0}^{\infty}~s^{\T \q}~(s+ \Tc)^{ \Tl}~e^{ - \mu  s } ds  }
,\no\end{eqnarray*}
and
\begin{align*}
&&\Rui_{q}(0)  =\left( \frac{\lambda}{c\mu}\right)  \frac{U\left(
\T \q+1 ,\Tl + \T \q+1,\Tc \mu\right) }{U\left(\T \q+1,\Tl + \T \q + 2,\Tc
\mu\right)  }=\left( \frac{\lambda}{c\mu}\right)
\frac{\int_{0}^{\infty}%
\,t^{\T \q}~(1+t)^{
\Tl -1}\,e^{-\Tc \mu t }~dt}{\int_{0}^{\infty}\,\,\,t^{\T \q}%
~(1+t)^{\Tl} \,e^{-\Tc \mu t } ~dt}\\&&=\left( \frac{\Tl}{ \mu
}\right) \frac{\int_0^{\infty} {s}^{\T \q} \, ({s}+{\Tc} )^{ \Tl-1} \,
 e^{- \mu  s}
 {d s} }{\int_{0}^{\infty}~s^{\T \q}~(s+ \Tc)^{ \Tl}~
 e^{ - \mu  s } ds  } =\left( \frac{\Tl}{\Tc}\right)
\frac{\int
_{0}^{\infty}\,t^{\T \q}%
~(t+\mu)^{\Tl-1}\,e^{-\Tc t}~dt}{\int_{0}^{\infty}\,\,\,t^
{\T \q}~(t+\mu)^{\Tl} \,e^{-\Tc t}%
dt}.
\end{align*}

For $(a,\I), a>-\Tc$, the~same proof works after replacing $c,z(0)$ by $c(a),z(a)$.

\end{proof}

\ssec{Essentials of \fp theory for the \Seg\ \ \proc \la{s:fpS}}
We  gather  now together the most basic \fp results for  the  \Seg\ \ process with $\r=1$ and  $\c=0$ (so that the absolute ruin point is $-\Tc=0$), and $ \mu=1.$
 The general case with $\c \neq 0, \r \neq 0,  \mu \neq 1,  $ can be obtained by replacing $z,a$ in the theorem below by $z(x):=\mu (x+ \fr{\c}{c_1})$ and  $z(a)$ -- see Section \ref{s:PG1}.
\beT \la{tK}When $\r =1, \c = 0,$ the following formulas hold, with $n=\q+\l$:
 \BEN \im
  The function $ H_\q$   is up to a \prc
 \be
 H_\q(z)=K_1(z)\sim z^n e^{-z}  M(\q+1,n+1,z)=z^n M(\l ,n+1,-z), \ee
   where $K_1(z)$ is the  unique
$q$-harmonic function which increases on $(0,\I)$. The last expression, obtained via a Kummer transformation, is sometimes more stable numerically.

 \im For $a  >0$, the ruin function is
 \be \Rui_{\q}(z,a )=E_z [e^{-q T_{a ,-}}]=\l  \fr{K_2(\q,n,z)}{K_2(\q,n+1,a  )} = \bc \fr{\l }{\mu  } \fr{e^{-z} z^n U(\q+1,n+1,z)}{e^{-a  } a ^{n+1} U(\q+1,n+2,a  )},  & z \geq a  >0\\\frac{\l  \Gamma \left(1+\q\right) }{ \Gamma \left(1+\q +\l\right)}e^{-z} z^n U(\q+1,n+1,z),&a=0\ec
 \la{Pau}\ee
 ({where we used $\lim_{z \to 0} z^{\q + 1 + \l}  U[1 + \q, \q + 2 + \l,z] =\frac{\Gamma \left(1+\q +\l\right) }{ \Gamma \left(1+\q\right)}$} for the case $a=0$).

\im   $$\Rui_{\q}(z,a ,\theta)=E_z [e^{-q T_{a ,-}+\th X_{\ta}}]=\Rui_{\q}(z,a )\fr{\mu}{\mu + \th}$$
 (by the memoryless property  of \expo \ claims).

\im For $z \geq  a  > 0$, cf.   \eqr{We}, the~scale function  is  given by
 \beq &&  \la{WSeg} W_q(z,a )=H_\q(z)-\Rui_{\q}(z,a )\int_0^{a } \mu e^{-\mu y}H_\q(a -y)dy=K_1(q,n,z)-\Rui_{\q}(z,a ) \fr{K_1(\q,n+1,a )}{n+1}\no \\&&=K_1(q,n,z)-\fr{\l}{n+1} \fr{K_1(\q,n+1,a )}
 {K_2(\q,n+1,a )} K_2(q,n,z)\\\no
 &&=z^n e^{-z} \pp{ M(\q+1,n+1,z)-\fr{\l }{n + 1} U(\q+1,n+1,z) \, \fr{M(q+1,n+2,a  ) }{  U(\q+1,n+2,a  )}  }.
\eeq

Since this is only determined \upc, we may and will usually take \fno
\beq &&W_\q(z,a )= z^n e^{-z}\begin{vmatrix} M(1+\q,\l+1+\q,z) & M(1+\q,\l+2+\q,a  ) \\ \l  U(1+\q,\l+1+\q,z) & (\l  + 1+\q)  U(1+\q,\l+2+\q,a  ) \end{vmatrix}=\\&& \no  z^n e^{-z}\pp{
{(\l +\q + 1) M(\q+1,n+1,z) U(\q+1,n+2,a  )-\l  U(\q+1,n+1,z) \, M(q+1,n+2,a  ) }}. \la{Wee}\eeq

 The second derivative is
\bea &&{W_\q''(z,a)}=\mu^2 \pr{K_1''(q,n,z)-\fr{\l}{n+1} \fr{K_1(\q,n+1,a )}
 {K_2(\q,n+1,a )} K_2''(q,n,z)} \Lra {W_\q''(z,a)} e^{a} a^{-n+2} / \Gamma (n) \\&&=\frac{\,
   _1\tilde{F}_1(q+1;n+2;a) \left(\frac{\lambda  n \left(a
   n-a q-n^2+n\right) U(q+1,n+1,a)}{U(q+1,n+2,a)}+a (a-n+1)
   ((n+1) (n-q)-\lambda  n)\right)}{n+1}+\\&&\left(-(a+1) n+a
   q+n^2\right) \, _1\tilde{F}_1(q+1;n+1;a)\eea
 and \sats $$W_\q''(0,0) =\frac{(n-1) n \left(1-\frac{\lambda  n U(q+1,n+1,0)}{(n+1)^2
   U(q+1,n+2,0)}\right)}{\Gamma (n+1)}<0$$
   iff
   \be \q + \l <1.\ee

\im  The two-sided ruin function \eqr{Rui} with stopping at an upper bound $b$ \sats
 \bea &&\Rui_{\q}^b(z,a )=\Rui_{\q}(z,a )  -\fr{W_\q(z,a )}{W_\q(b,a )}  \Rui_{\q}(b,a )=
 \fr{\l }{a } \fr{e^{-z} z^n U(\q+1,n+1,z)}{e^{-a} a ^n U(\q+1,n+2,a )} -\fr{\l }{a } \fr{e^{-b} b^n U(\q+1,n+1,b)}{e^{-a} a ^n U(\q+1,n+2,a )}  \times \\
 &&
 (\frac z b)^n e^{b-z}\fr{(\l +\q + 1) M(\q+1,n+1,z) U(\q+1,n+2,a )-\l  U(\q+1,n+1,z) \, M(q+1,n+2,a ) }
 {(\l +\q + 1) M(\q+1,n+1,b) U(\q+1,n+2,a )-\l  U(\q+1,n+1,b) \, M(q+1,n+2,a ) }. \la{Zee}
\eea

  \EEN
\eeT

 \begin{proof} 1. holds since $K_1(z)$ is the unique
solution which increases on $(0,\I)$.

2. is a particular case of Theorem 1.

 3. This follows from the memoryless property of the \expoj.

 4. Apply   \eqr{We}. \Mp
\bea &&   W_q(z,a )=H_\q(z)-\Rui_{\q}(x,a )\int_0^{a }  e^{y-  a }H_\q(y)dy=K_1(z,n)-\Rui_{\q}(x,a ) \fr{K_1(a,n+1)}{n+1}\\&&=z^n e^{-z}  M(\q+1,n+1,z)-{\l } \fr{e^{-z} z^n U(\q+1,n+1,z)}{ a ^{n+1} U(\q+1,n+2,a )} \int_0^{a }  y^n M (
\q+ 1,n + 1,y )dy \no
\\&&=z^n e^{-z}  M(\q+1,n+1,z)-{\l } \fr{e^{-z} z^n U(\q+1,n+1,z)}{ a ^{n+1} U(\q+1,n+2,a )} \fr{a ^{n+1}}
{n+1} \, M(q+1,n+2,a) \no
\\&&=z^n e^{-z} \pp{ M(\q+1,n+1,z)-\fr{\l }{n + 1} U(\q+1,n+1,z) \, \fr{M(q+1,n+2,a) }{  U(\q+1,n+2,a )}  },
\eea
where we used $\int_0^{a }  y^n M (
\q+ 1,n + 1,y )dy=\fr{a ^{n+1}}
{n+1} \, M(q+1,n+2,a)$ -- see \cite{AS}.

5. This result is immediate.
 \end{proof}

 \beR   
 The apparent singularity in \eqr{Pau} when $a,z\to 0$ may be removed, since \be \la{ru00} \Rui_{q}(0,0)=\lim_{z->0} \Rui_{q}(z,z) =\lim_{z->0} \left( \frac{\l }{z}\right)  \frac{U\left(
\q +1 ,\l  + \q +1,z\right) }{U\left(\q +1,\l  + \q  + 2,z\right)  }=\frac{\l   \Gamma (\q+\l  )}{\Gamma (\q+\l  +1)}=\frac{\l }{\l  + \q}.\ee

This result has a clear probabilistic interpretation and holds in fact
clearly for any \Lm of finite negative intensity $\l$.

\eeR

  \beR
When $\q=0$, \eqr{Wee} and
\beq \bc M(1,1+\l,z)= \lambda  e^x x^{-\lambda } \g(\l,z)\\ U(1,1+\l,z)= e^x x^{-\l} \G(\l,z)=e^x E_{1-\lambda }(z) \ec \la{MUG}\eeq
where  $\G(\l,y)=\int_y^\I {x^{\l -1}} e^{-x} dx$ is the incomplete gamma function,  $\g(\l,y)=\int_0^y {x^{\l -1}} e^{-x} dx=\G(\l)-\G(\l,y)$  is the lower incomplete gamma function,
    and $E_{\lambda }(z)=\int_1^\I t^{-\l} e^{-x t} dt$ is the ExpIntegral function, yield
$$W(z,a )= z^{\l} e^{-z} \begin{vmatrix} M(1,\l+1,z) & M(1,\l+2,a ) \\ \l  U(1,\l+1,z) & (\l  + 1)  U(1,\l+2,a ) \end{vmatrix}=e^a \left(-a^{-\lambda -1}\right) \Gamma (\lambda +2)\lambda (  \Gamma (\lambda
   ,z)-\Gamma (\lambda,a )).$$

Up to a constant, we have $$W(z,a ) \sim  \Gamma (\lambda,a )-\Gamma (\lambda
   ,z)=\int_a^z t^{\l -1} e^{-t} dt \Lra W'(z,a ) \sim   e^{-z} z^{\l -1},    $$ a particular case of the  formula
   $W'(z,a ) \sim   e^{-z} c(z)^{\l -1},$
   which will be rederived below.
\eeR

\sec{{The Renewal Equation for the Scale Derivative of \lev Driven Langevin Processes \la{ex:fb} {Czarna~et~al.}~(2017)}}

 One tractable extension of the \Seg-Tichy process is provided by {is} the ``Langevin-type'' risk process
defined by
\begin{equation}  \la{Lang} X_t=x+\int_0^t c(X_s)\;ds + Y_t, 
\end{equation}
where $Y_t$ is a~spectrally negative \lev \proc, 
and $ c(u)$  is a~\nne
premium function    {satisfying}
\be \la{intc} u >0 \Lra c(u) >0, \;   \int_{x_0}^\I\frac{1}{c(u)}\,du = \I,  \; \for x_0  >0.\ee

The integrability condition above is necessary to preclude explosions.
 Indeed when $Y_t$ is a compound Poisson process, in~ between jumps (claims) the risk process \eqr{Lang} moves
deterministically along the curves $x_t$ determined by the vector field
\bea \la{vf}\fr {dx}{dt}=c(x) \Eq t=\int_{x_0}^{x} \fr {du}{c(u)}:=C(x;x_0), \for x_0 >0.\eea

From the last equality, it may be noted
that if $C(x;x_0)$ satisfies $\lim_{x \to \I} C(x;x_0)< \I$, then $x_t$ must explode, and~the stochastic process $X_t$ may explode.

The case of Langevin processes 
has been tackled recently in \cite{CPRY}, who provide the construction of the process \eqr{Lang} in the particular case of { non-increasing} functions $c\cd$. This setup can be used to model dividend payments, and~other mathematical finance applications.

\cite{CPRY} showed that the $W,Z$ scale \funs \; which provide a~basis for first passage problems
 of \lev \sp negative processes have two variables extensions $\mW,\mZ$ for the process \eqr{Lang}, which~satisfy integral equations. The~equation for $\mW$, obtained by putting $\phi(x)=c(a) -c(x)$ in  \cite[eqn.~(40)]{CPRY}, is:
 \beq \la{reneqg} &&\mW_{\q}(x,a)  =W_{\q}(x-a)  + \int_a^x (c(a)-c(z)) W_{\q}(x-z) \mW_{\q}'(z;a) dz,
 \eeq
where $W_{\q}$ is the scale function of the \lev process
 obtained by replacing $c(x)$ with $c(a)$.

It follows that the { scale derivative}
 \bea 
 \w_{\q}(x,a)=\pd{}{x}\mW_\q(x,a)\eea of the scale function of the process \eqr{Lang}
 \sats \ a~Volterra renewal equation \cite[eqn.~(41)]{CPRY}:
 \be \la{reneqw} \w_{\q}(x,a)\le({1+ (c(x)-c(a)) W_\q(0)} \ri)  ={w_{\q}(x-a)}
   + \int_a^x (c(a)- c(z)) w_{\q}(x-z) \w_{\q}(z;a) dz,
 \ee
 where $w_{\q}$ is the derivative of the scale function  of the \lev process
 $Y_t=Y_t^{(a)}$ obtained by replacing $c(x)$ with $c(a)$.
 This may further be written as:

  \beq \la{reneqC} && w_{\q}(x-a)  + \int_a^x w_{\q}(x-z) \w_{\q}(z;a) (c(a)-c(z)) dz =\bc \w_{\q}(x,a),& Y_t  \text{ of unbounded variation}\\
 \w_{\q}(x,a)\fr {c(x)}{c(a)},& Y_t \text{ of bounded variation}\ec.
 \eeq

\beQ It is natural to conjecture that the formula \eqr{reneqC} holds for all drifts satisfying \eqr{intc}, but this is an~open problem for now.
\eeQ

\beR\label{rem_lt} Note that renewal equations are a~more appropriate tool than Laplace transforms for the general Langevin problem. Indeed, taking ``shifted \LT" $\mL_a f(x)=\int_a^\I e^{-s (y-a)} f(y) dy$ of \eqr{reneqC}, putting
   \bea \bc \H \w_{\q}(s,a)=\int_a^\I e^{-s { (y-a)}} \w_{\q}(y,a) dy,\\ \H \w_{q,c}(s,a)=\int_a^\I e^{-s{ (y-a)}} \w_{\q}(y,a)  c(y)dy \\ {\H w_{\q}(s)=\int_0^{\infty}e^{-s y} w_{\q}(y)dy}\ec,\eea
 and using $$\mL_a [\int_a^x f(x-y) l(y) dy](s)=\mL_0 f(s) \mL_a l(s)$$ yields equations with two unknowns:
 \beq  \la{LTCPRY} \H w_{\q}(s)(1+c(a)\H \w_{\q}(s,a)-\H \w_{q,c}(s,a))=\bc \H \w_{\q}(s,a)& \text{ unbounded variation case}\\ \fr{\H \w_{\q,c}(s,a)}{c(a)} & \text{ bounded variation case} \ec,\eeq
 whose solution is not obvious.

\eeR
\ssec*{{The Linear Case} $c(x)=rx+c$}
To get explicit \LTs, we will turn next to \OU type processes\footnote{For some background first passage results on these processes, see \fe  \cite{borovkov2008exit,LoefPat}.} $X\cd$, with $c(x)= c(a) + r (x-a)$,
which implies \beq\label{aux_jl_2}
 \H \w_{\q,c}(s,a)=\int_a^\I e^{-s  {(y-a)}} \w_{\q}(y,a) (r {(y-a)}+c(a)) dy=-r\H \w_{\q}'(s,a)+c(a) \H \w_{\q}(s,a).
 \eeq

Equation \eqr{LTCPRY} simplify then to:
 \beq  \la{AUCPRY} \H w_{\q}(s)(1+r\H \w_{\q}'(s,a))=\bc \H \w_{\q}(s,a)
 & \text{ unbdd variation case}\\\H \w_{\q}(s,a)-\fr{r}{c(a) }\H \w_{\q}'(s,a)& \text{ bdd variation case} \ec.\eeq

\beR Note that the only dependence on $a$ in this equation is via $c(a)$, and~via the shifted \LT. Since $a$ is fixed, we may and will from now on  simplify by assuming w.l.o.g. $a=0$, and~write $c=c(a)$. \eeR

Let now
 $$ \k(s)= \a_0 s^2 + c s- s \H {\ovl \lm}(s) -q, \a_0 >0, $$ denote the \LE\ or symbol of the \lev process $Y_t=\sqrt{2\alpha_0}B_t-J_t+ct$,
and recall that
\bea w_{\q}(s)=\bc \fr s{\k(s)}&\text{ unbdd variation case}\\
\fr s{\k(s)}-\fr 1 c&\text{ bdd variation case}\ec \eea
 (where we have used that $W_\q(0)=0(\fr 1 c)$ in the two cases, respectively).

We obtain now from \eqr{AUCPRY}
the following ODE
\beq\label{odeCPRY}
r\H \w_{\q}'(s,a)-\frac{\k(s)}{s}\H \w_{\q}(s,a)=-1+\fr{\k(s)}{ s}W_\q(0)=\bc-1&\text{ unbdd variation case}\\-1+\fr{\k(s)}{c s}:=-\fr{h(s)}{c}&\text{ bdd variation case}\ec,
 \eeq
where $$h(s)=\H {\ovl \lm}(s) +\fr \q{ s}.$$

 \beR \la{r:int}
 The Equation \eqr{odeCPRY} is easily solved multiplying by an~integrating factor
\beq \la{defI0} I_{q}(s,s_0)=e^{-\int_{s_0}^{s}\,\frac{ \k(z)/z}r dz }=e^{-\int_{s_0}^{s}\,\frac{\a_0 z+c -\woF
(z)-\q/z}r dz }, \eeq where $s_0 > 0$ is an~arbitrary integration limit chosen so that
the integral converges (the formula \eqr{defI0} appeared first in \cite{AU}).
To simplify, we may choose $s_0=0$ to integrate
the first part $\a_0 z+c -\woF(z)$, and~ a~different lower bound $s_0=1$ to integrate $q/z$.
Putting $\Tq=\fr q r, \Tc=\fr c r, \T \a_0 =\fr {\a_0} r$, we~get~that
\begin{equation} \label{defI}
I_{\q}(s)=e^{-\int_{\cdot}^{s}\,\frac{ \k(z)/z}r dz }= s^{\Tq }
e^{-\left[  \left(  \frac{\T \a_0}%
{2}\right)  s^{2}+\Tc s\right]  + \frac{1}{r \, } \int_0^s \woF(z) dz} := s^{\Tq } I(s):=e^{-\Tc s} i_{\q}(s),
\end{equation}
where we replaced $s_0$ by $\cdot$ to indicate that two different lower bounds are in fact used, and~we put $I(s)= I_0(s)$ (the subscript $0$ will be omitted when $q=0$).
\eeR

 Solving \eqr{odeCPRY} yields:
\beT \la{cor:wI} Fix $a$ and put $\ovl I_{\q}(s)= \int_s^\I   I_{\q}(y) dy$. Then, the \LT \ \ of the \sd of an~\OU type process \eqr{Lang} satisfies:

 \begin{equation}  \la{wI}  \H \w_{\q}(s,a) = \fr{ \ovl I_{\q}(s) }{r I_{\q}(s)}- W_\q(0)=\bc \fr{ \ovl I_{\q}(s) }{r I_{\q}(s)},&
	\text{ in the unbounded variation case}\\
 \fr{ \ovl I_{\q}(s) }{r I_{\q}(s)} -\fr 1 c,&\text{ in the bounded variation case}\ec.\end{equation}

\eeT

\begin{proof}
{In the unbounded} 
variation case, applying the integrating factor to \eqr{odeCPRY} yields immediately:

\bea \no &&\H \w_{\q}(s,a) I_{\q}(s)= r^{-1} \int_s^\I {I_{\q}(y)} dy=r^{-1} \ovl I_{\q}(s).\eea

In the bounded variation case, we observe that $$i_{\q}'(s)=\fr{h(s)}r i_{\q}(s),$$ where $i_q$ is defined in \eqr{defI}. An integration by parts now yields
\bea \no &&\H \w_{\q}(s,a) I_{\q}(s)= \int_s^\I \fr{h(y)}{c r} {I_{\q}(y)} dy=\int_s^\I \fr{h(y)}{c r} e^{-\Tc y} i_{\q}(y) dy\\&&=c^{-1} \int_s^\I   e^{-\Tc y} i_{\q}'(y) dy=c^{-1}(-I_{\q}(s)+\Tc \int_s^\I   e^{-\Tc y} i_{\q}(y) dy)=r^{-1} \ovl I_{\q}(s) dy-c^{-1} I_{\q}(s).\eea
\end{proof}

\beR The result \eqr{wI} is quite similar to the \LT \ for the \sur and \rp\ (Gerber-Shiu functions) derived in \cite[p. 470]{AU}---see \eqr{AU}, \eqr{seg0q} below; the main difference is that in that case additional effort was needed for finding the values $\sRui(a,a), \Rui(a,a)$. \eeR

\sec{The \LT -Integrating Factor Approach for Jump-Diffusions with Affine Operator 
{Avram and Usabel} (2008) \la{s:AU}}
 We summarize now for comparison the results of \cite{AU} for the still tractable, more general extension of the \Seg-Tichy process provided by
 jump-diffusions with
 { affine premium and volatility}
\beq \la{aff}
\bc  c(x)= r x + c\\ \fr{\s^2(x)}2= \a_1 x + \a_0, \; \a_1, \a_0 \geq 0.
\ec  \eeq

Besides \OU type processes, \eqr{aff} includes
 another famous particular case, \CIR (CIR) type processes, obtained {when $\a_1 >0$}.

Introduce now a~{ combined ruin-survival} expected payoff at
time $t$
\begin{equation}
{V}(t,u)=\mathbb{E}_{ X_{0}=u  }\left[{w}\left(  X_{T }\right)
\,1_{\left\{  T \,<\,t\right\}  } + \,p(X_{t}%
)\,\,1_{\left\{  T \,\geq\,t\right\}  }\right]  \label{payoff}%
\end{equation}
where ${w}, p$
represent, respectively:

\begin{itemize}
\item A penalty ${w}(X_{T })$ at a~stopping time ${T }$, \; $\,
{w}:\;\mathbb{R} \mathbb{\rightarrow R}$

\item A reward for survival after $t$ years: $p(X_{t}%
),\,\,{\emph{p}}:
\;\mathbb{R} \mathbb{\rightarrow R}^{+}$.
\end{itemize}

Some particular cases of interest are the survival probability
for $t$ years, obtained with
\[
w(X_{T })=0,\;\;p(X_{t})=1_{\left\{  X_{t}\,\,\geq\,0\;\right\}
}\;\;
\]
and the ruin probability with deficit larger in absolute value than
$y$, obtained with
\[
w(X_{T })=1_{\left\{  X_{T }\,<\,-y\,\;\right\} },\;\;p(X_{t})=0\;\;
\]

Let \beq {V}_{q}(x)=\int_0^\I q e^{-q t} V(t,x) dt =E_x \left[{w}\left(  X_{T }\right)
\,1_{\left\{  T \,<\,\kil_q\right\}  } + \,p(X_{\kil_q}%
)\,\,1_{\left\{  T \,\geq\,\kil_q\right\}  }\right],\la{LCGS} \eeq
 denote a~ ``Laplace-Carson''/``Gerber Shiu'' discounted penalty/pay-off.

\begin{Pro} \cite[Lem. 1, Thm. 2]{AU} {(a)} Consider the \proc\  \ \eqr{aff}. Let $V_q(x)$ denote the corresponding \GS function \eqr{LCGS}, let $w_\lm(x)= \int_{x}^\I w(x -u) \lm(du)$ denote the expected payoff at ruin,
and let $g(x):=w_\lm(x)+q p(x), \H g(s)$ denote the combination of the two payoffs and its \LT; note that the particular cases $$\widehat{g}(s)=\fr \q s, \; \widehat{g}(s)=\l \ovl F(s)$$ correspond to the \sur and \rp, respectively \cite{AU}.

Then, the~Laplace transform of the derivative
 $$V_*(x)=\int_0^{\infty}e^{-sx}dV_q(x)={s}{\H V_q(s)- V_q(0)}$$
satisfies the ODE
\beq \la{AU*} &&  \left( \a_1 s + r\right) V_*(s)^{\prime}-    (\fr{\k(s)}{s}-\a_1)
  V_*(s)   = - h(s) {V}_{q}(0) -{\a_0}\,{V}_{q}^{\prime}(0) +\widehat{g}(s) \Lra \no\\&&
  V_*(s) I_{\q}(s)=\int_s^\I
   I_{\q}(y)\fr{h(y) V_{\q}(0) +{\a_0}\,{V}_{q}^{\prime}(0) -  \widehat{g} (y)}{r +\a_1 y}\; dy,\eeq
where $h(s)=\H {\ovl \lm}(s) +\fr \q{ s}$ (this corrects a~typo in \cite[eqn. (9)]{AU}), and where the integrating factor is obtained from \eqr{defI0} by replacing $c$ with $c-\a_1$ \cite[eqn. (11)]{AU}.
 Equivalently,
   \beq \la{AU} && r \left( s \H V_{\q}(s)\right)^{\prime}-    \fr{\k(s)}{s}
\,s \H V_{\q}(s)   = - (c +\a_0 s) {V}_{q}(0) -{\a_0}\,{V}_{q}^{\prime}(0)  +\widehat{g}(s)\Lra \no\\&&s \H V_{\q}(s) I_{\q}(s)=\int_s^\I I_{\q}(y)\fr{(c +\a_0 s) V_{\q}(0) +{\a_0}\,{V}_{q}^{\prime}(0) - \widehat{g} (y)}{r +\a_1 y} \; dy.\eeq

(b) If $\a_0=0=\a_1$ and $q>0$, the \sur probability satisfies
\beq \label{seg0q} && \sRui_{\q}(0) =\fr{\Tq \ovl I_{q-1}(0)}{\Tc \ovl I_{q}(0)} \la{s0} \\&&s \H{\sRui}_{\q}(s) I_{\q}(s) = \int_s^\I I_{\q}(y)(\Tc \sRui_{\q}(0)- \fr {\Tq}y)  dy=\Tc \sRui_{\q}(0) \ovl I_{\q}(s)-\Tq \ovl I_{q-1}(s)=\Tq\le(\fr{ \ovl I_{q-1}(0)}{ \ovl I_{q}(0)} \ovl I_{q}(s) - \ovl I_{q-1}(s)\ri) \no
 \eeq
\end{Pro}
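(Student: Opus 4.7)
The key observation driving both parts is that when the coefficients $c(x), v(x)$ are affine in $x$, the Laplace transform converts the integro-differential equation for $V_q$ into a \emph{first-order} linear ODE in $s$ (rather than a purely algebraic equation), because multiplication by $x$ corresponds to $-\partial_s$ on the transform side. Once this ODE is isolated, both statements reduce to standard integrating-factor manipulations.

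\textbf{Step 1 (Set up the IDE).} Under the affine assumption \eqref{aff}, the standard generator computation gives
\begin{equation*}
(\alpha_1 x + \alpha_0) V_q''(x) + (rx+c) V_q'(x) + \int_0^{x} V_q(x-z)\,\lm(dz) - (\lambda + q) V_q(x) + g(x) = 0,
\end{equation*}
together with $V_q(0)=w(0_-)$ (if $\alpha_0>0$), $V_q(\infty)=p(\infty)$, and the boundary relation $A(0,D)V_q(0)=0$ recalled in the excerpt. These conditions determine the constants $V_q(0), V_q'(0)$ that will appear after transforming.

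\textbf{Step 2 (Transform term by term).} I would now apply the Laplace transform, using
$\mathcal{L}[x f(x)](s) = -\widehat{f}\,'(s)$,
$\mathcal{L}[V_q'](s) = s\widehat{V}_q(s) - V_q(0)$,
$\mathcal{L}[V_q''](s) = s^2 \widehat{V}_q(s) - s V_q(0) - V_q'(0)$,
and $\mathcal{L}[V_q * \lm](s) = \widehat{V}_q(s)\,\widehat{\lm}(s)$. Writing everything in terms of $V_*(s) := s\widehat{V}_q(s) - V_q(0)$ and collecting, the $x$-multiplied terms produce the coefficient $(\alpha_1 s + r)$ in front of $V_*'(s)$, while the remaining terms assemble into the Laplace symbol $\kappa(s) = \alpha_0 s^2 + cs - s\widehat{\overline{\lm}}(s) - q$ divided by $s$, together with the boundary constants $V_q(0), V_q'(0)$. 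The result is exactly the ODE
\begin{equation*}
(\alpha_1 s + r)\, V_*'(s) - \Big(\tfrac{\kappa(s)}{s} - \alpha_1\Big)\, V_*(s) = -h(s) V_q(0) - \alpha_0 V_q'(0) + \widehat{g}(s).
\end{equation*}
The typo correction the paper mentions will emerge here from careful bookkeeping of the $\alpha_1 x$ contribution.

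\textbf{Step 3 (Integrate using $I_q$).} The homogeneous equation is solved by $I_q(s)$ as defined in \eqref{defI0}, modified by $c \mapsto c - \alpha_1$, since $(\log I_q)'(s) = -\big(\kappa(s)/s - \alpha_1\big)/(\alpha_1 s + r)$. Multiplying the ODE by $I_q(s)$ converts the left-hand side to $(V_*\, I_q)'(s)$, and integrating from $s$ to $\infty$ yields the stated formula, provided one can justify $V_*(s) I_q(s) \to 0$ as $s \to \infty$. This is the only step requiring analytic care: it follows from boundedness of $V_q$ (hence $V_*(s) \to 0$ as $s\to\infty$ by the Riemann--Lebesgue argument applied to $\mathcal{L}[V_q']$) and the rapid growth of $I_q(s)$ being controlled by the term $\tfrac{\tilde{\alpha}_0}{2}s^2 + \tilde{c}s$ in the exponent, which is an honest positivity issue but not a hard one. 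The equivalent form \eqref{AU} follows by retracing $V_*(s) = s\widehat{V}_q(s) - V_q(0)$ and absorbing $V_q(0)$ into the integral.

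\textbf{Step 4 (Part (b)).} Specializing to $\alpha_0 = \alpha_1 = 0$, $q>0$, and the survival case $w\equiv 0$, $p\equiv 1$, we have $\widehat{g}(y) = q/y$ and $V_q(0) = \sRui_q(0)$ is unknown. The formula from (a) collapses to
\begin{equation*}
s\widehat{\sRui}_q(s) I_q(s) = \int_s^\infty I_q(y)\,\Big(\tilde{c}\,\sRui_q(0) - \tfrac{\tilde{q}}{y}\Big)\,dy = \tilde{c}\,\sRui_q(0)\,\overline{I}_q(s) - \tilde{q}\,\overline{I}_{q-1}(s),
\end{equation*}
where I use $I_q(y)/y = y^{\tilde{q}-1} I(y) = I_{q-r}(y)$, which the paper labels $I_{q-1}$ in the shifted-by-one convention. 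To pin down $\sRui_q(0)$, I let $s \downarrow 0$. Since $I_q(s) = s^{\tilde q} I(s) \to 0$ and $s\widehat{\sRui}_q(s) \to \sRui_q(\infty) = 1$ by the final value theorem, the left side vanishes, forcing
\begin{equation*}
\sRui_q(0) = \frac{\tilde{q}\,\overline{I}_{q-1}(0)}{\tilde{c}\,\overline{I}_q(0)},
\end{equation*}
which is \eqref{s0}. Substituting this value back into the displayed identity gives the final equality $\tilde{q}\big(\tfrac{\overline{I}_{q-1}(0)}{\overline{I}_q(0)}\overline{I}_q(s) - \overline{I}_{q-1}(s)\big)$.

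\textbf{Main obstacle.} The only nontrivial step is Step~2: carrying out the Laplace transform of the full IDE with \emph{both} affine drift and affine diffusion, keeping track of all the boundary contributions at $x=0$ (which survive into the right-hand side as $V_q(0), V_q'(0)$), and verifying that the auxiliary boundary condition $A(0,D)V_q(0)=0$ is precisely what is needed to kill the spurious terms. Once the clean form of the ODE is in hand, the rest is routine.
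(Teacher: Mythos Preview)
Your proposal is correct and follows essentially the same route as the paper. The paper does not re-prove part (a) at all---it simply cites \cite{AU} for the derivation of the ODE and its integrating-factor solution---so your Steps 1--3 supply what the paper outsources; for part (b) the paper, like you, specializes \eqref{AU} with $\widehat g(y)=q/y$, writes the right-hand side as $\tilde c\,\sRui_q(0)\,\overline I_q(s)-\tilde q\,\overline I_{q-1}(s)$, and sends $s\to 0$ to extract $\sRui_q(0)$ (your justification that $I_q(s)\to 0$ while $s\widehat{\sRui}_q(s)$ stays bounded is more explicit than the paper's one-line ``Letting $s\to 0$'').
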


\begin{proof}
{(b) The survival \pro \ follow from (a), by plugging $\widehat{g}(y)=\fr q y$. Indeed, the Equation~\eqr{AU} becomes for the \sur \proba}
\[s \H \sRui_{\q}(s) I_{\q}(s)=\int_s^\I I_{\q}(y)(\Tc \sRui_{\q}(0)  - \fr \Tq  y) \; dy = \Tc \sRui_{\q}(0) \ovl I_{\q}(s)-\Tq \ovl I_{\q-1}(s).\]

Letting $s \to 0$ in this equation yields $\sRui_{\q}(0) =\fr{\Tq \ovl I_{q-1}(0)}{\Tc \ovl I_{q}(0)}$.

As a~check, let us verify also Equation \eqr{AU} for the ruin \proba, by plugging $\widehat{g}(y)=\l \ovl F(y)$:
\bea &&s \H \Rui_{\q}(s) I_{\q}(s) = \int_s^\I I_{\q}(y)(\Tc \Rui_{\q}(0)-\l \ovl F(y)) dy=
\Tc \Rui_{\q}(0) \bar I_{\q}(y)-J(y),\\&& J(y)=\int_s^\I y^{\Tq }
 e^{- \Tc y}  j'(y)  dy, \;  j(y):=e^{\Tl \int_0^y \bar F(z) dz}.\eea

Integrating by parts, $J(y)= -I_{q}(s) +\Tc \ovl I_{\q}(s)-\Tq \ovl I_{q-1}(s)$. Finally,

\beq &&s \H \Rui_{\q}(s) I_{\q}(s) = \Tc (1-\sRui_{\q}(0)) \ovl I_{q}(s)-\Big(-I_{q}(s) +\Tc \ovl I_{\q}(s)-\Tq \ovl I_{q-1}(s)\Big)=\no\\&&I_{q}(s)+\Tq \ovl I_{q-1}(s)-\Tc \sRui_{\q}(0) \ovl I_{\q}(s)=I_{q}(s) -s \H{\sRui}_{\q}(s) I_{\q}(s).\eeq
\end{proof}

\ssec*{Segerdahl's Process via the  Laplace Transform\ Integrating Factor}
 We revisit now the particular case of Segerdahl's process with \expo \ claims of rate $\mu$ and $\a_0=\a_1=0$.
Using
$\ovl \lm(y)=\l F_C(y) dy=\fr \l{y + \mu}$ we find that
for Segerdahl's process the integrand in the exponent is
$$\fr{\k(s)}{rs}= \Tc -\Tl/(s+ \mu)-\Tq/s,$$ and the integrating factor \eqr{defI} may be taken as
$$I_{\q}(x)=x^{\Tq} e^{- \Tc x}(1+x/\mu)^{\Tl}.$$

The antiderivative $\bar I_\q(x)$ is not explicit, except
for:

\begin{enumerate}
\item $x=0$, when \ith
$$  \bar I_\q(0)= \mu^{\Tq +1 } U (\Tq +1,\Tq +\Tl +2,\Tc \mu ),$$
where \cite[{13.2.5}]{AS}
\footnote{Note that when $c=1$, this function
reduces
 to a~power:
$
U\left( a,a+1,z\right) =\frac{\int_{0}^{\infty}~t^{a-1}~e^{-z
t}~dt}{\Gamma(a)}=z^{-a}.
$}
$$U[a, a+c, z] = \fr 1{\Gamma[a]} \int_0^\I e^{-z t} t^{a - 1} (t + 1)^{c - 1} dt,  Re[z] > 0, Re[a] > 0.$$

\item for $q=0$, when \ith
$$I(x)=e^{- \Tc x}(1+x/\mu)^{\Tl}, \;  \bar I(x)=\int_x^\I I(y) dy=\frac{e^{\Tc \mu } (\Tc \mu)^{-\Tl } \Gamma (\Tl +1,\Tc (x+\mu ))}{\Tc}.
$$
\end{enumerate}

However, the~\LTs of the integrating factor $I_{\q}(x)$ and its primitive are explicit:
\beq \no  &&\H I_{\q}(s)=\int_0^\I e^{-(s+\Tc) x} x^{\Tq} (1+x/\mu)^{\Tl}=\G(\Tq +1)U(\Tq +1, \Tq +\Tl +2, \mu (\Tc+s),\\&&  \H {\ovl I}_{\q}(s)= \G(\Tq +1)\fr{U(\Tq +1, \Tq +\Tl +2, \mu \Tc)-U(\Tq +1, \Tq +\Tl +2, \mu (\Tc+s)}s. \la{LTs}\eeq

Finally, we may compute:

 \beq &&\sRui_{\q}(0) =\fr{\Tq \ovl I_{q-1}(0)}{\Tc \ovl I_{q}(0)}=\fr{\Tq U (\Tq,\Tq +\Tl +1,\Tc \mu )}{\Tc \mu U (\Tq +1,\Tq +\Tl +2,\Tc \mu )}\no \\&&\Rui_{\q}(0) =1-\sRui_{\q}(0)=1- \fr{\Tq U (\Tq,\Tq +\Tl +1,\Tc \mu )}{\Tc \mu U (\Tq +1,\Tq +\Tl +2,\Tc \mu )} \no \\&&=\fr{\Tc \mu U (\Tq +1,\Tq +\Tl +2,\Tc \mu )- {\Tq} U (\Tq,\Tq +\Tl +1,\Tc \mu )}{\Tc \mu U (\Tq +1,\Tq +\Tl +2,\Tc \mu )} \no \\&&=\left( \frac{\l}{c \mu }\right) \frac{ U\left( \T \q+1%
,\T \q + 1+\Tl,\mu   \Tc \right) }{U\left( \T \q+1,\T \q +
\Tl+2,\mu \Tc  \right) },\eeq
where we used the identity \cite[{13.4.18}]{AS}
 \beq \la{sPau} U[a-1, b, z]+(b-a) U[a, b, z]=z U[a, b+1,z], a>1,\eeq
 with $a =\Tq+1, b=\Tq + \Tl +1$.
 This checks the (corrected) Paulsen result \eqr{Pauruin} for $x=0$.

\beR We can now numerically answer Problem 1: (a) obtain the antiderivative $\bar I_\q(x)$ by numerical integration; (b) compute the \LT \ of the scale derivative by \eqr{wI}; c) Invert the \LT. \eeR

The example above raises the question:

\beQ Is it possible to compute explicitly the \LTs of the integrating factor $I_{\q}(x)$ and its primitive for affine processes
with phase-type jumps? \eeQ

\sec{Revisiting Segerdahl's Process via the Scale Derivative/Integrating Factor Approach \la{s:Segr}}

 Despite the new scale derivative/integrating factor approach, we were not able to produce further explicit results beyond \eqr{Pau}, due to the fact that neither the scale derivative, nor the integral of the integrating factor are explicit when $\q >0$ (this is in line with \cite{avram2010lie}). \eqr{Pau} remains thus for now an~outstanding, not well-understood exception.

 \beQ Are there other explicit \fp results for Segerdahl's process when $\q>0$? \eeQ

 In the next subsections, we show that via the scale derivative/integrating factor approach, we may rederive \wk \ results for $q=0$.

\ssec{Laplace Transforms of the Eventual Ruin and Survival Probabilities \la{s:Ak}}
For $\q=0$, both \LTs and their inverses are explicit, and~several classic results may be easily checked. The~scale derivative may be obtained using Proposition \ref{cor:wI} and
$ \Gamma (\Tl +1,v)=e^{-v} v^{\Tl}+ \lambda \Gamma (\Tl,v) $ with $v=\Tc (s+\mu )$.
 We find  \beq \la{scSeg} &&\widehat {\mathbf w}(s,a) =\fr{{e^{\Tc \mu } (\Tc \mu )^{-\lambda } \Gamma (\lambda +1,\Tc
   (s+\mu ))}}{e^{- \Tc s}(1+s/\mu)^{\Tl}}-1=1+ \lambda e^{v } (v )^{-\Tl } \Gamma (\Tl,v) -1=\Tl U(1,1+ \Tl,\Tc (s+\mu ))\no \\&& \Lra {\w}(x,a)=
\frac{\Tl}{{\Tc}}\left(1+ \frac{x}{\tilde{c}}\right)^{\Tl-1}
e^{-\mu x},\eeq
which checks \eqr{scder}.
Using again $\H {\w}(s)=\Tc \fr{ \ovl I(y)}{ I(y)} -1$ yields the ruin and \sur probabilities:
\bea &&s \H {\sRui}(s)  =\fr{ \int_s^\I \Tc \sRui(0)I(y) dy}{I(s)}=\sRui(0) ({\H \w}(s)+1)\\&&s \H \Rui(s)  = \fr{\int_s^\I (\Tc \Rui(0)-\fr{\Tl}{y+\mu})I(y) dy}{I(s)}=\Rui(0) ({\H \w}(s)+1)   - \H {\w}(s).
\eea

Letting $s \to 0$ yields
\beq \label{seg0} \no &&\Rui(0) =\fr{\H {\mathbf w}(0)}{\H {\mathbf w}(0)+1}=
 \frac{\Tl U\big(1, 1+\Tl,\mu \Tc\big)}
{\mu \Tc \; U\big(1, 2+\Tl,\mu \Tc\big)}=\fr{\Tl \Gamma (\Tl,\Tc \mu )} { \Gamma (\Tl +1,\Tc \mu )} \Eq\\&& \sRui(0) =\fr{\lim_{s \to 0} s \H {\sRui}(s)}{\H
{\mathbf w}(0)+1} = \frac{\sRui(\I)}
{1+ \Tl \; U\big(1, 1+\Tl,\mu \Tc\big)}=\frac{1}
{\mu \Tc \; U\big(1, 2+\Tl,\mu \Tc\big)}\eeq

For the survival probability, we finally find
\bea &&s \H {\sRui}(s) =
\sRui(0) (1+ \H {\mathbf w}(s))=
\fr{1 +\Tl  U\big(1, 1+\Tl,\mu (\Tc+s)\big)}{1 +\Tl  U\big(1, 1+\Tl,\mu \Tc\big)}=\fr{\Tc (\mu +s) U\big(1, 2+\Tl,\mu(\Tc+s)\big)}{\Tc \mu U\big(1, 2+\Tl,\mu \Tc\big)},\eea
which checks with the \LT \ of the Segerdahl result \eqr{segLT}.

\ssec{The Eventual Ruin and \sur \pros \la{s:Ak}}
These may also be obtained directly by integrating the explicit scale derivative ${\w}(x,a)=
\frac{\Tl}{{\Tc}}\left(1+ \frac{x}{\tilde{c}}\right)^{\Tl-1}
e^{-\mu x}$ \eqr{scSeg} 
Indeed,
\bea &&\int_u^\I {\mathbf w}(x) dx=\int_u^\I \frac{\Tl}{\tilde{c}}\left(1+ \frac{x}{\tilde{c}}\right)^{\Tl-1}
e^{-\mu x}dx= \Tl e^{\mu \Tc}\int_{1+ \frac{u}{\tilde{c}}}^\I y^{\Tl-1} e^{\mu \Tc y}dy\\&&=\Tl e^{\mu \Tc}
\fr {1}{(\mu \T c)^{\Tl}}
\int_{\mu(\T c+ u)}^\I t^{\Tl-1}
e^{- t}dt=\Tl e^{\mu \Tc}
(\mu \T c)^{-\Tl} \Gamma(\Tl,\mu(\T c+ u)),
\eea
 {where $\Gamma(\eta, x) = \int_x^{\infty} t^{\eta -1}e^{-t}dt$ is the
incomplete gamma function.}
The
\rp\ is \cite{Seg}, \cite[ex. 2.1]{Pau}:
\begin{eqnarray}\label{segint}
&&\Rui(x) = \Tl\frac{\exp(\mu
  \Tc )(\mu \Tc)^{-\Tl }\Gamma\big(\Tl,\mu (\Tc + x)\big)}
{1+\Tl\exp(\mu
  \Tc )(\mu \Tc)^{-\Tl }\Gamma(\Tl,\mu \Tc)}= \Tl\frac{e^{-\mu
  x }(1+ x/\Tc)^{\Tl}\; U\big(1, 1+\Tl,\mu (\Tc +x)\big)}
{1+ \Tl U\big(1, 1+\Tl,\mu \Tc \big)} \no\\&&=\fr{\Tl}{\mu \Tc}  \frac{e^{-\mu
  x }(1+ x/\Tc)^{\Tl  }\; U\big(1, 1+\Tl,\mu (\Tc +x)\big)}
{ U\big(1, 2+\Tl,\mu \Tc \big)}=\fr{\Tl \Gamma (\Tl, \mu(\Tc+x) )} { \Gamma (\Tl +1,\Tc \mu )},
\end{eqnarray}
 where we used
\beq \la{TriG} U\big(1, 1+\Tl,v)= e^v v^{-\Tl } \Gamma (\Tl,v)\eeq
and \beq \la{sU} 1+ \Tl U\big(1, 1+\Tl,v \big)= v U\big(1, 2+\Tl,v \big),\eeq which holds by integration by parts.

A simpler formula holds for the rate of ruin $\rui(x)$ and its \LT
\beq \la{segLT}&& \rui(x)=-\Rui'(x)=\fr{{\mathbf w}(x)}{1+ \int_0^\I {\mathbf w}(x) dx}=
\fr{\Tl}{ \Gamma (\Tl +1,\Tc \mu )} \mu (\mu(\Tc+x) )^{\Tl-1}e^{-\mu(\Tc+x)} =e^{-\mu \Tc}\g_{\Tl,\mu}(x + \Tc) \Eq \no \\&&
\H \rui(s)=\sRui(0) \H{\mathbf w}(s)=\bc
 \fr{\Tl U(1,1+ \Tl,\Tc (s+\mu ))}{\Tc \mu U(1,2+ \Tl,\Tc \mu )}, &c> 0\\ (1+s/\mu )^{-\Tl }, &c= 0\ec,\eeq where $\g$ denotes a~(shifted) Gamma density.
Of course, the~case $c>0$ simplifies to a~Gamma density when moving the origin to the “absolute ruin'' point $-\Tc=-\fr c r,$, i.e., by putting $y=x+ \Tc, Y_t=X_t + \Tc$,
where the process $Y_t$ has drift rate $r Y_t$.

\beQ Find a~relation between the ruin derivative $\rui_\q(x)=-\Rui_q'(x)$ and the scale derivative ${\mathbf w_\q}(x)$ when $q>0$. \eeQ

\section{Asmussen's Embedding Approach for Solving Kolmogorov's Integro-Differential Equation
with Phase-Type Jumps \la{s:CLSa}} One of the most convenient approaches to get rid of the integral term in \eqr{reneq} is a~probabilistic transformation which gets rid of the jumps as in  \cite{asmussen1995stationary}, when the downward
phase-type jumps have a
survival \fun $$\bar{F}_C(x)=\int_x^{\infty} f_C(u) du= {\vec \b}e^{B x}
 \bff 1,$$ where ${ B}$ is a~$n\times n$ stochastic generating matrix
(nonnegative off-diagonal elements and nonpositive row sums),
$\vec \b=(\b_1,\ldots,\b_n)$ is a~row probability vector (with nonnegative elements and
$\sum_{j=1}^n\b_j=1$), and
$\bff 1=(1,1,...,1)$ is a~column probability vector.

The density is ${f_C(x)}=\vec \b e^{-B x} \bff b$, where ${\bff b}=(-B) \bff 1$
is a~column vector, and~the Laplace transform is 
$$\hat{b}(s)={\vec \b}(s I -\bff B)^{-1} \bff b.$$

Asmussen's approach \cite{asmussen1995stationary,asmussen2002erlangian} replaces the negative jumps by segments of slope $-1$, embedding the original \sn \lev process into a~continuous Markov modulated \lev process. For the new process we have auxiliary unknowns {$A_i(x)$} representing ruin or survival probabilities (or, more generally, Gerber-Shiu functions) when starting at $x$ conditioned {on} a~phase $i$ with drift downwards (i.e., in one of the “auxiliary stages of artificial time'' introduced by changing the jumps to segments of slope $-1$).
Let ${\bf A}$ denote the column vector
with components $A_1,\ldots,A_n$. The~ Kolmogorov integro-differential equation turns then into a~system of ODE's, due to the continuity of the embedding process.

\begin{equation}\label{Line}
\left(
\begin{aligned}
 \Rui_\q'(x)\\
{\bf A}'(x)\\
\end{aligned}\right)=
\left(
\begin{matrix}
\frac{\lambda+q}{c(x)} & -\frac{\lambda}{c(x)}{\vec \b}\\
 {\bf b} &  {\bf B}\\
\end{matrix}\right)
\left(
\begin{aligned}
 \Rui_\q(x)\\
{\bf A}(x)\\
\end{aligned}\right),\ {x\geq 0.}
\end{equation}

 For the \rp\ with exponential jumps {of rate $\mu$} for example, there is only one downward phase, and~the system is:
\begin{equation} \label{expoLine} \left(\begin{array}{c}
   \ {\Rui}_\q'(x)\\   A'(x)
\end{array} \right) = \left(\begin{array}{cc}
\frac{\lambda +q}{c(x)} &- \frac{\lambda }{c(x)} \\  \mu & -\mu
\end{array} \right)  \left(\begin{array}{c}
  \Rui_\q(x)\\   A(x)
\end{array} \right)\ {x\geq 0}.\end{equation}
 {For survival probabilities,
one only needs to modify the boundary conditions---see the following~section.}

\ssec{Exit Problems for the
\Seg-Tichy process, with $\q=0$}
Asmussen's approach is particular convenient for solving exit problems for the
\Seg-Tichy process.

\beXa {\bf The eventual ruin probability}.
 {{\bf When $q=0,$} the system for the ruin probabilities {with $x\geq0$} is:}
\begin{equation}
\left\{\begin{aligned} {\Rui}'(x)&= \frac{\lambda }{c(x)} \
(\Rui(x) - A(x)), \quad   \; \; \Rui(\infty)&=
A(\infty)= 0\\
 A'(x)&= \mu  \ (\Rui(x)- A(x)),  \
 \quad \quad  \; \;  &A(0)= 1
\end{aligned}\right.
\end{equation}

This may be solved by subtracting the equations. Putting \bea K(x)=e^{-\mu
x + \int_0^x \frac{\lambda} {c(v)} d v}, \eea we find:
\begin{equation} \label{ZSeg}
\bc \Rui(x)-A(x)&=(\Rui(0)-A(0))
K(x),   \\
A(x)&=  \mu (A(0)-\Rui(0)) \int_{x}^\I K(v) d v,
\ec,
\end{equation}
whenever $K(v)$ is integrable at $\infty$.

The boundary
condition $
A(0)=1$ implies that $1-\Rui(0)=\fr{1}{\mu \int_0^{\infty} K(v) d
v}$ and
\bea &&
\quad A(x)= \mu (1-\Rui(0)) \int_x^{\infty} K(v)
d v=\fr{\int_x^{\infty} K(v) d v}{\int_0^{\infty} K(v)
d v},\\&&\Rui(x)-A(x)={-\fr{K(x)}{\mu \int_0^{\infty} K(v) d v}}.\eea

Finally,
\bea &&\Rui(x)=A(x)+\left(\Rui(x)-A(x)\right)=\fr{ \mu
\int_x^\I K(v) d v - K(x)}{\mu \int_0^{\infty} K(v) d v},
\eea
 and {for the survival probability $\sRui$,}
\beq \la{seg1} && \sRui(x)=\fr{\mu
\int_0^{x} K(v) d v + K(x)}{\mu \int_0^{\infty} K(v) d v}:= \sRui(0) {\mathbf W}(x)=\frac{{\mathbf W}(x)}{{\mathbf
W}(\infty )}, \\&& {\mathbf W}(x)=\mu  \int_0^x
K(v) d v + K(x), \no\eeq
where $\sRui(0) =\frac{1}{{\mathbf
W}(\infty )}$ by plugging ${\mathbf
W}(0 )=1$ in the first and last terms in \eqr{seg1}.

We may also rewrite \eqr{seg1} as:
\begin{eqnarray}\la{scder}  &&\sRui(x)=\frac{1+\int_0^x \w(v) d v}{1+\int_0^\infty
\w(v) d v} \Leftrightarrow  \Rui(x)=\frac{\int_x^\infty  \w(v) d v}{1+\int_0^\infty  \w(v)
d v},  \w(x):={\mathbf W}'(x)=\fr{\lambda  K(x)}{c(x)}\end{eqnarray}

Note that $\w(x) >0$ implies that the scale function ${\mathbf W}(x)$ is
nondecreasing.
\eeXa



\beXa For the two sided exit problem on $[a,b]$, 
a similar derivation yields the
scale function
$${\mathbf W}(x,a)=\mu  \int_a^x
\fr{K(v)}{K(a)} d v + \fr{K(x)}{K(a)}=1+\fr{1}{K(a)} \int_a^x \w(y) dy,$$
with scale derivative derivative
   $ \w(x,a)=\fr{1}{K(a)}\w(x)$,
where
$\w(x)$ given by \eqr{scder} does not depend on $a$.

Indeed, the~ analog of \eqref{ZSeg} is:
\begin{equation}
\left\{\begin{aligned} \label{ZSegc} \sRui^{b}(x,a)-A^{b}(x)&=\sRui^{b}(a,a) \fr{K(x)}{K(a)},   \nonumber\\
A^{b}(x)&= \mu \sRui^{b}(a,a)  \int_a^x \fr{K(v)}{K(a)} d v,
\end{aligned}\right.\end{equation}
implying {by the fact that $\sRui^{b}(b,a)=1$ that}
\beq &&\sRui^{b}(x,a)=\sRui^{b}(a,a) \le(\fr{K(x)}{K(a)}+\mu \int_a^x \fr{K(v)}{K(a)} d v\ri)=\fr{{\mathbf W}(x,a)}
{{\mathbf W}(b,a)}=\fr{1+ \fr{1}{K(a)}\int_a^x\w(u) d u} {1+\fr{1}{K(a)}\int_a^b\w(u)
d u}  \Eq \no \\ && \Rui^{b}(x,a)= \fr{\int_x^b\w(u) d u} {K(a)+\int_a^b\w(u)
d u} \Eq \la{pdscale} \\&& \rui^{b}(x,a):=- (\Rui^{b})'(x,a)=  \fr{ \w(x)} {K(a)+\int_a^b\w(u)
d u}=\w(x,a)\fr{ \sRui(a,a)} {\sRui(b,a)}. \no\eeq 

\eeXa


\beR The definition adopted in this section for the scale function ${\mathbf W}(x,a)$ uses the normalization ${\mathbf{W}(a,a)}=1$, which~is only appropriate in the absence of Brownian motion. 
\eeR

\beQ Extend the \eqs \; for the \sur and \rp\ of the Segerdahl-Tichy process in terms of the scale derivative
$\w_\q$, when $q>0$.
Essentially, this requires obtaining
$$T_q(x)=E_x\Big[ e^{-q [T_{a,-} \min  T_{b,+}]} \Big ]$$
\eeQ

\sec{Further Details on the Identities Used in the Proof of Theorem \ref{t:W} \la{s:a}}

{We recall first some continuity and differentiation relations needed here \cite{AS}}
\beP Using the notation $M = M(a, b, z), M(a+) = M(a + 1, b, z), M(+,+) = M(a + 1, b+1, z)$, and~so on,
 the Kummer and
Tricomi functions satisfy the following identities:
\begin{equation*}
  b M+ (a- b) M\left(
b+ \right)=a M\left(a+ \right) \tag{13.4.3}\end{equation*}
\begin{equation*} b\big(M(a+)- M \big) =z M\left(+,+ \right) \tag{13.4.4} 
\end{equation*}
\begin{equation*}(b - a) U +
 z  U( b+2 ) = (z + b) U(b+1) \tag{13.4.16} \end{equation*} \begin{equation*}
 U+ a~U\left(+,+ \right) = U\left(
b+ \right) \tag{13.4.17}\end{equation*}\begin{equation*}U+(b-a-1) U(a+1)=z U(+,+) \tag{13.4.18}\end{equation*}
(see corresponding equations in \cite{AS}).
\beq \la{der}
U'=-a U\le(+,+\ri), \quad M'=\fr{a}{b}
M\le(+,+\ri).\eeq

\eeP

\beP

The functions $K_i(\Tq,\Tl,z)$ defined by \eqr{Ki} satisfy
the identities
\beq K_1'(\Tq,n, z)&=&(\Tq+\Tl)e^{-z} z^{\Tq + \Tl-1} \
 M \le(\Tq, \Tq + \Tl, z\ri)=(\Tq + \Tl) K_1(\Tq-1,\Tl,z) \la{Kids1}\\
 K_2'(\Tq,n, z)&=&- e^{-z}
z^{\Tq + \Tl-1} \
 U \le(\Tq, \Tq + \Tl, z\ri)=- K_2(\Tq-1,n, z) \la{Kids2}\eeq

 \beq K_2(\Tq,n, z)= \int_z^{\infty}
({y-z})^{\T \q} \, (y )^{ n-\Tq-1} \,
 e^{- y}
 {d y} 
  \label{UIdent}
\end{eqnarray}

\eeP
\prf 
For the first identity, note, using (\cite{AS} \cite{AS}, {13.4.3, 13.4.4}), that
\bea \fr{e^{z} }{z ^{\Tq + \Tl-1}} K_1'(z)&=&  ({\Tq+\Tl}-{z}) M\left( \Tq+1,\Tq
+1+ \Tl,z\right) + z \fr{\Tq + 1}{\Tq + \Tl + 1} M\left( \Tq+2,\Tq
+2+ \Tl,z\right)
\\&=&  ({\Tq+\Tl})M\left( \Tq+1,\Tq +1+ \Tl,z\right)\\&+&  \fr{z}{\Tq + \Tl + 1} \le((\Tq + 1) M\left( \Tq+2,\Tq +2+ \Tl,z\right)-({\Tq + \Tl +1})
M\left( \Tq+1,\Tq +1+ \Tl,z\right) \right)\\&=& ({\Tq+\Tl})M\left( \Tq+1,\Tq +1+ \Tl,z\right)-  \fr{z}{\Tq + \Tl + 1} \Tl
M\left( \Tq+1,\Tq +2+ \Tl,z\right) \\&=&  ({\Tq+\Tl})M\left( \Tq+1,\Tq +1+ \Tl,z\right)-   \Tl\le(M\left( \Tq+1,\Tq +1+ \Tl,z\right)-M\left( \Tq,\Tq +1+ \Tl,z\right)\ri)
\\&=&  \Tq M\left( \Tq+1,\Tq +1+ \Tl,z\right) +   \Tl
M\left( \Tq,\Tq +1+ \Tl,z\right).\eea

The second formula may be derived similarly using 13.4.17, or by considering the
function
  \bea _z {\T U}(\Tq+1, \Tq+1+\Tl,\mu):=
  \Gamma(q+1) K_2(z)  =\int_z^{\infty} ({s-z})^{\Tq} \,
({s} )^{  \Tl-1} \,
 e^{-  \mu s} {d s} \eea
 appearing in the numerator of the last form of \eqref{UIdent}.
 An integration by parts yields
 \bea &&_z {\T U}'(\Tq+1, \Tq+1+\Tl,1)=\int_z^{\infty} ({s-z})^{\Tq} \,
 \fr{d}{d z}[
({s} )^{  \Tl-1} \,
 e^{-   s}] {d s}\no \\&&=  (\Tl -1)
_z {\T U}(\Tq+1,\Tq+ \Tl,1) - _z {\T U}(\Tq+1, \Tq+\Tl+1,1), \; \Lra
\\&&
 K_2'(\Tq+1, \Tl,z)=  e^{-z}
z^{\Tq + \Tl-1}\le((\Tl -1)
 U(\Tq+1, \Tq +\Tl,z) -
 U (\Tq+1, \Tq +\Tl+1,z)\ri) \eea
 and the result follows by (\cite{AS} \cite{AS},
 {13.4.18.})\footnote{See also \cite[p. 640]{BS}, where however the first formula has a~typo.}

The third formula is obtained by the substitution $y=z(t+1)$.

\sec{Conclusions and Future Work} \la{s:con}
Two promising fundamental functions have been proposed for working with generalizations of Segerdahl's process: (a) the scale derivative $\w$ \cite{CPRY} and (b) the integrating factor $I$ \cite{AU}, and~they are shown to be related via Thm. 1.

Segerdahl's process per se is worthy of further investigation.
A priori, many risk problems (with absorbtion/reflection at a~barrier $b$ or with double reflection, etc.) might be solved by combinations of the hypergeometric functions $U$ and $M$.

However, this approach leads to an~impasse for more complicated jump structures, which~ will lead to more complicated hypergeometric functions.
In that case, we would prefer answers expressed in terms of the fundamental functions $\w$ or $I$.

We conclude by mentioning two promising numeric approaches, not discussed here. One due to \cite{JJ} bypasses the need to deal with high-order hypergeometric solutions by employing complex contour integral representations. The~second one uses Laguerre-Erlang expansions---see \cite {abate1996laguerre,ALR}. 
 Further effort of comparing their results
with those of the methods discussed above seems worthwhile.

\small
\bibliographystyle{amsalpha}

\bibliography{Pare37}
\end{document}